\newcommand{\Z}{\mathbb{Z}}
\newcommand{\gen}[1]{\langle #1 \rangle}
\newcommand{\gp}[2]{\langle #1 \, | \, #2 \rangle}
\newcommand{\sgp}[1]{\langle #1 \rangle}
\def\coloneqq{\mathrel{\mathop\mathchar"303A}\mkern-1.2mu=}
\begin{document}

\title{The submonoid and rational subset membership problems for Artin groups}
\author{Islam Foniqi}

\date{}
\maketitle

\theoremstyle{plain}
\newtheorem{theorem}{Theorem}[section]
\newtheorem{prop}[theorem]{Proposition}
\newtheorem{conj}[theorem]{Conjecture}
\newtheorem{notation}[theorem]{Notation}
\newtheorem*{note}{Note}
\theoremstyle{definition}

\newtheorem{question}[theorem]{Question}

\setlength{\parindent}{0em}
\setlength{\parskip}{0.5em} 
\author{}

\newaliascnt{conjecture}{theorem}
\newtheorem{conjecture}[conjecture]{Conjecture}
\aliascntresetthe{conjecture}
\providecommand*{\conjectureautorefname}{Conjecture}

\newaliascnt{lemma}{theorem}
\newtheorem{lemma}[lemma]{Lemma}
\aliascntresetthe{lemma}
\providecommand*{\lemmaautorefname}{Lemma}

\newaliascnt{cor}{theorem}
\newtheorem{cor}[cor]{Corollary}
\aliascntresetthe{cor}
\providecommand*{\corautorefname}{Corollary}

\newaliascnt{claim}{theorem}
\newtheorem{claim}[claim]{Claim}

\newaliascnt{notation}{theorem}
\aliascntresetthe{notation}
\providecommand*{\notationautorefname}{Notation}

\aliascntresetthe{claim}
\providecommand*{\claimautorefname}{Claim}

\newaliascnt{remark}{theorem}
\newtheorem{remark}[remark]{Remark}
\aliascntresetthe{remark}
\providecommand*{\remarkautorefname}{Remark}

\newtheorem*{claim*}{Claim}
\theoremstyle{definition}

\newaliascnt{definition}{theorem}
\newtheorem{definition}[definition]{Definition}
\aliascntresetthe{definition}
\providecommand*{\definitionautorefname}{Definition}

\newaliascnt{example}{theorem}
\newtheorem{example}[example]{Example}
\aliascntresetthe{example}
\providecommand*{\exampleautorefname}{Example}

\newaliascnt{question}{theorem}
\aliascntresetthe{question}
\providecommand*{\exampleautorefname}{Question}

\begin{abstract}
We demonstrate that the submonoid membership problem and the rational subset membership problem are equivalent in Artin groups. Both these problem are undecidable in a given Artin group if and only if the group embeds the right-angled Artin groups of rank~$4$ over a path or a square; and this can be characterized using only the defining graph of the Artin group. These results generalize the ones by Lohrey - Steinberg for right-angled Artin groups.
Moreover,
both these decision problems are decidable for a given Artin group if and only if the group is subgroup separable.
This equivalence for right-angled Artin groups is provided by Lohrey - Steinberg and Metaftsis - Raptis. The equivalence for general Artin groups comes from some observations here and the characterization of separable Artin groups by Almeida - Lima.
\end{abstract}

\renewcommand{\thefootnote}{\fnsymbol{footnote}} 

\thefootnote{
\noindent
\emph{MSC 2020 classification:} 20F10, 20F36, 20F65.
\newline
\noindent
\emph{Key words:} Artin groups, parabolic subgroup, rational subset, submonoid}


\section{Introduction and preliminaries}

In this article we investigate two decision problems, namely the submonoid, and the rational subset membership problem, in the well-known class of Artin groups.

\subsection{Artin groups}

Artin groups are ﬁnitely presented groups deﬁned by a ﬁnite simplicial labeled graph. They form a huge class of groups, which includes free groups, free abelian groups, the classical braid groups, right-angled Artin groups, etc. There are not many results that hold for all Artin groups, e.g. it is conjectured and still an open problem, that all Artin Groups have solvable word problem.

A finite simplicial graph~$\Gamma$ is a tuple~$\Gamma = (V, E)$, where~$V$ is a finite set whose elements are called {\it vertices},~$E$ is a set of two-element subsets of~$V$ whose elements are called {\it edges}. When referring to a graph~$\Gamma$, we will  denote by~$V\Gamma$,~$E\Gamma$ its vertex set, and edge set respectively.

An {\it  Artin graph} consists of a finite simplicial graph~$\Gamma = (V, E)$ together with a function~$m\colon E\to \{2,3,4,\dots \}$ called {\it labeling} of the edges.

Given an Artin graph~$\Gamma$, the corresponding {\it Artin group based on~$\Gamma$} (also known as the {\it Artin--Tits group}) and denoted by~$A(\Gamma)$ is the group with presentation
$$A(\Gamma) \coloneqq \langle\, V \mid (u,v)_{m(\{u,v\})}= (v,u)_{m(\{u,v\})}  \text{ for all } \{u,v\}\in E\,\rangle,$$
where~$(u,v)_n$ denotes the prefix  of length~$n$ of the infinite alternating word~$uvuvuv\dots$. 

Associated to an Artin graph, we can also construct the {\it Coxeter group based on~$\Gamma$} which is the group with presentation 
$$C(\Gamma) \coloneqq \langle\, V \mid v^2=1  \text{ for all }  v\in V, \, (u,v)_{m(\{u,v\})}= (v,u)_{m(\{u,v\})}   \text{ for all }  \{u,v\}\in E\,\rangle.$$

An Artin graph~$\Gamma$ and the corresponding group~$A(\Gamma)$ are called of {\it spherical type} if the associated Coxeter group~$C(\Gamma)$ is finite. 

For~$S \subseteq V$, we denote by~$A_S$ to the subgroup of~$A(\Gamma)$ generated by the vertices of~$S$. 
Subgroups of this form are called {\it standard parabolic subgroups}, and a theorem of \cite{van1983homotopy} shows that~$A_S\cong A(\Delta)$ where~$\Delta$ is the  Artin subgraph of~$\Gamma$ induced by~$S$.


As a convention, whenever an edge~$e = \{a, b\}$ has the label~$2$, we will drop the label, while keeping in mind that the  relation coming from that edge is~$ab = ba$. We will sometimes refer to them as \emph{unlabeled edges}. This will help to present easier some discussions about graphs and right-angled Artin groups, defined in \autoref{Right-angled Artin groups}. 

\subsection{Monoids, groups, and decision problems}
A \emph{monoid} is a set equipped with an associative binary operation and an identity element. They generalize groups, because a monoid where every element has an inverse is a group.

One of the most basic examples of monoids, is the set of non-negative integers with addition, the identity element being~$0$. 

For a non-empty alphabet~$A$ we denote by~$A^{\ast}$ the \emph{free monoid} of all words over~$A$,  including the empty word denoted by~$\varepsilon$.    





Given a subset~$X$ of a monoid, we use~${X}^{\ast}$ to denote the submonoid generated by~$X$, and similarly if~$X$ is a subset of a group then~$\sgp{X}$ denotes the subgroup generated by~$X$.

\subsection{Decision problems}
A \emph{decision problem} is a YES -- or -- NO question on a countable infinite set of inputs. 
A decision problem is called \emph{decidable} if there is an algorithm which takes an input, terminates after a finite amount of time, and correctly answers the question by YES or NO. If such an algorithm does not exist, we refer to the decision problem as \emph{undecidable}.

Word problem is one of the fundamental algorithmic questions in algebra, and in group theory in particular. For a group~$G = \gp{A}{R}$, the word problem asks whether~$w \in (A \cup A^{-1})^{\ast}$ is equal to~$1_G$ in~$G$. This problem is still open for Artin groups.

A more general concept is the \emph{generalized word problem} (also known as the \emph{subgroup membership problem}) for a group~$G$: existence of an algorithm that decides for every element~$g \in G$ and every ﬁnitely generated subgroup~$H \leqslant G$ whether~$g$ belongs to~$H$ or not.

Natural generalizations of these classical decision problems are the submonoid membership problem, and the rational subset membership problem, which we define shortly below.


\subsection{Submonoid and rational subset membership problems}

Let~$M$ be a monoid finitely generated by a set~$A$, and~$\phi:A^{\ast} \rightarrow M$ the corresponding canonical homomorphism. 

The \emph{submonoid membership problem} for~$M$ is the decision problem: 
\begin{itemize} 
\item{\textsc{Input}:} A finite set of words~$S \subseteq A^{\ast}$ and a word~$w \in A^{\ast}$   
\item{\textsc{Question}:}~$\phi(w) \in \phi(S^{\ast})$\,?   
  \end{itemize}
Note that~$\phi(S^{\ast})$ is equal to the submonoid of~$M$ generated by~$\phi(S)$.     
The decidability of this problem is independent of the choice of finite generating set of the monoid. \\
If~$G$ is a group with finite generating set~$A$, the set~$A \cup A^{-1}$ is a finite monoid generating set for~$G$, hence one can define the submonoid membership problem to groups as well.

The set of \emph{rational subsets} of a monoid~$M$ is the smallest subset of the power set of~$M$ which contains 
all finite subsets of~$M$, and is closed under union, product, and Kleene hull. \\
The \emph{Kleene hull} of a subset~$L$ of a monoid~$M$ is just the submonoid~$L^{\ast}$ of~$M$ generated by~$L$. \\
Note that every finitely generated submonoid of~$M$ is a rational subset (being equal to the Kleene hull of a finite set).
A subset~$L \subseteq M$ is rational if and only if~$L = \phi(R)$ for some rational subset~$R$ of~$A^{\ast}$. 


The \emph{rational subset membership problem} for~$M$ is the decision problem: 
\begin{itemize} 
\item{\textsc{Input}:} A rational subset~$R \subseteq M$, and a word~$w \in A^{\ast}$
\item{\textsc{Question}:}~$\phi(w) \in R$\,?   
\end{itemize}
The rational subset membership problem also applies to groups~$G = \gp{A}{R}$ where we view the group~$G$ as a monoid generated by~$A \cup A^{-1}$.
\begin{remark}
Decidability of the rational subset membership problem implies decidability of the submonoid membership problem.
\end{remark}
The rational subset membership problem is more general than the submonoid membership problem. The result~\citep[Corollary~2.8]{bodart2024membership} provides a group with decidable submonoid membership problem, and undecidable rational subset membership problem.

Non-uniform analogues of the problems above consider a fixed finitely generated submonoid (or a rational subset) and ask whether there is an algorithm deciding membership there. \\
In full generality, for any subset~$S$ of~$M$ by the \emph{membership problem for~$S$ within~$M$} we mean the decision problem:
\begin{itemize} 
\item{\textsc{Input}:} 
A word~$w \in A^{\ast}$   
\item{\textsc{Question}:}~$\phi(w) \in S$? 
  \end{itemize}
One can talk as well about the membership problem for~$S$ within a finitely generated group~$G$. 

If~$M$ and~$T$ are finitely generated monoids with~$T \leqslant M$ then: ~$M$ having decidable submonoid (respectively rational subset) membership problem, implies the same for~$T$.

\subsection{Right-angled Artin groups}\label{Right-angled Artin groups}
The class of right-angled Artin groups (shortly RAAGs) is a subclass of Artin groups, when the labeling map~$m$ for the Artin graph~$\Gamma = (V, E)$ satisfies~$m(E) \subseteq \{2\}$; in other words, commutations are the only possible relations between the generators.
The class of RAAGs plays an important role in geometric group theory. For an expanded background on RAAGs we refer the reader to the survey of \cite{charney2007introduction}.

When working with RAAGs, we drop the labels on edges, and we use only the graph structure to define and work with them.
For a finite simplicial graph~$\Gamma = (V, E)$ we use~$A(\Gamma)$ to denote the right-angled Artin group based on~$\Gamma$, defined by the presentation 
$$A(\Gamma) \coloneqq \langle\, V \mid uv = vu \text{ whenever } \{u,v\}\in E\,\rangle.$$

Two important families of examples of RAAGs, include the ones definied by paths and cycles.

\begin{definition}
Let~$P_n$ and~$C_n$ denote the path and cycle graphs on~$n$ vertices. Below we provide~$P_4$ and~$C_4$.

\begin{figure}[H]
\centering
\begin{tikzpicture}[>={Straight Barb[length=7pt,width=6pt]},thick]
\draw[] (-6.5, 0) node[left] {$P_4 =~$};
\draw[fill=black] (-6,0) circle (1pt) node[above] {$a$};
\draw[fill=black] (-5,0) circle (1pt) node[above] {$b$};
\draw[fill=black] (-4,0) circle (1pt) node[above] {$c$};
\draw[fill=black] (-3,0) circle (1pt) node[above] {$d$};
\draw[] (-3, 0) node[right] {$\,\,\, ,$};

\draw[thick] (-6,0) -- (-5,0);
\draw[thick] (-5,0) -- (-4,0);
\draw[thick] (-4,0) -- (-3,0);

\draw[] (1, 0) node[left] {$C_4 =~$};
\draw[fill=black] (2,1) circle (1pt) node[left] {$a$};
\draw[fill=black] (4,1) circle (1pt) node[right] {$b$};
\draw[fill=black] (4,-1) circle (1pt) node[right] {$c$};
\draw[fill=black] (2,-1) circle (1pt) node[left] {$d$};

\draw[thick] (2, 1) -- (4, 1);
\draw[thick] (4,1) -- (4,-1);
\draw[thick] (4,-1) -- (2,-1);
\draw[thick] (2,-1) -- (2,1);
\end{tikzpicture}
\end{figure} 

\end{definition}

In \citep{lohrey2008submonoid} it is proved that a RAAG~$A(\Gamma)$ has decidable submonoid membership problem if and only if it has decidable rational subset membership problem if and only if~$\Gamma$ does not contain both~$P_4$ and~$C_4$, as induced subgraphs. A complete characterization of RAAGs with decidable subgroup membership problem is not known; it is for example unknown whether~$A(C_5)$ has decidable subgroup membership problem.

\subsection{Subgroup separability}

A group~$G$ is called \emph{subgroup separable}
if every ﬁnitely generated subgroup of~$G$ is equal to an intersection of subgroups of ﬁnite index of~$G$. For ﬁnitely presented groups, being subgroup separable implies the solvability of the generalized word problem.

In \citep{metaftsis2008profinite} there is a criterion for the subgroup separability for RAAGs, where it is shown that a RAAG~$A(\Gamma)$ is subgroup separable if and only if~$\Gamma$ does not contain both the path~$P_4$ and the square~$C_4$, as an induced subgraphs. \\
The characterization of subgroup separable braid groups was solved by \cite*{dasbach2001automorphism}. The classification of subgroup separable Artin groups is done by \cite{almeida2021subgroup}; and recently in \citep{almeida2024subgroupseparabilityartingroups} the same authors provide another version of the characterization that uses only the defining graph.

\subsection{Presentation of results and strategy}
Often one would like to characterize a certain class of groups~$\mathcal{C}$ that satisfies a certain property~$\mathcal{P}$. 
In our case the class of groups will be the one of Artin groups, and the properties we will discuss will be submonoid (and rational subset) membership problems, and some other ones closely related to these two.

Some of the properties we study have some nice closure features, which will be helpful to provide characterizations of the given properties; some of the nice features are given below.
\begin{theorem}[{\citealp[Theorem 1]{lohrey2008submonoid}}]\label{thm: class of groups with decidable rational subset membership}
Let~$\mathcal{C}$ be the smallest class of groups such that:
\begin{enumerate}
    \item The trivial group~$1$ is in~$\mathcal{C}$,
    \item If~$G \in \mathcal{C}$, then~$H \times \Z \in \mathcal{C}$,
    \item If~$G \in \mathcal{C}$ and~$H \leqslant G$ is finitely generated, then~$H \in \mathcal{C}$,
    \item If~$G \in \mathcal{C}$ and~$G \leqslant H$ of finite index, then~$H \in \mathcal{C}$,   
    \item Fundamental groups of graphs of groups with finite edge groups and vertex groups belonging to~$\mathcal{C}$ are in~$\mathcal{C}$. In particular, the class~$\mathcal{C}$ is closed under free products.
\end{enumerate}
Then, for every group~$G\in \mathcal{C}$, the rational subset membership problem is decidable.
\end{theorem}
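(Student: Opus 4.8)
The plan is to argue by structural induction on the way a group $G\in\mathcal{C}$ is produced. Since $\mathcal{C}$ is by definition the \emph{smallest} class containing the trivial group and closed under the operations (1)--(5), it suffices to prove that the property
\[
\mathcal{P}(G):\ \text{``the rational subset membership problem for }G\text{ is decidable''}
\]
holds for the trivial group and is inherited under each of the constructions (2)--(5); the theorem then follows at once. The base case is immediate, since the only rational subsets of the trivial group are $\emptyset$ and $\{1\}$, so I only need the four inductive steps.

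\textbf{The routine closure steps.} For (3), given a finitely generated $H\leqslant G$ I would fix finite generating sets for both and write each generator of $H$ as a word over that of $G$; substituting these words turns any rational expression for $R\subseteq H$ into a rational expression for $R$ \emph{as a rational subset of $G$} (rational subsets are closed under monoid morphisms), so a query $w\in R$ in $H$ is literally the same query in $G$ and is decidable by $\mathcal{P}(G)$. For (4), with $G\leqslant H$ of finite index I would fix a finite transversal $T$ and run a finite automaton for a rational subset $R\subseteq H$ while simultaneously tracking the current coset; this exhibits, for each ordered pair $s,s'\in T$, a rational subset $R_{s,s'}\subseteq G$ computable from $R$, and $w\in R$ reduces to finitely many queries $g_{s,s'}\in R_{s,s'}$ in $G$ (a Reidemeister--Schreier-type argument; this closure under finite extensions is due to Grunschlag). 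For (2), the new $\Z$-factor is central, so a rational subset of $G\times\Z$ can be recognised by a finite automaton over the generators of $G$ alone, carrying a blind integer counter (no zero-tests) that records the $\Z$-coordinate; since rational subsets of $\Z$ are effectively semilinear, I would decompose accepting runs and reduce a membership query to a finite, decidable case analysis built from rational-subset-membership queries in $G$ and Presburger conditions over $\Z$. I expect all three of these steps to be essentially routine.

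\textbf{The main case: graphs of groups with finite edge groups.} This is the heart of the argument and is the place I would devote almost all the work; it is the generalization of Benois' theorem, which is the special case of free groups (free products of copies of $\Z$). Suppose $G=\pi_1(\mathcal{G})$ for a finite graph of groups $\mathcal{G}$ with finite edge groups and with vertex groups satisfying $\mathcal{P}$. I would work with the natural generating set $X$ consisting of the (finitely many) vertex-group elements together with one stable letter per edge, so that every element of $G$ has a Bass--Serre reduced form. Given a rational subset $R\subseteq G$ presented by a finite automaton $\mathcal{A}$ over $X^{\pm1}$, the idea is to repeatedly \emph{saturate} $\mathcal{A}$: whenever a word read along some path of $\mathcal{A}$ can be converted into an equal word of $G$ by either (i) an internal relation of a single vertex group, or (ii) one of the finitely many ``pinch'' identities $t\,e\,t^{-1}=(\text{edge-group element})$ attached to an edge, one adds new transitions so that the equal word is read along a path with the same endpoints. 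Because every edge group is finite there are only finitely many identities of type (ii), and by setting things up so that only boundedly many useful new transitions can ever be created, the process halts at an automaton $\mathcal{A}^{\ast}$ recognising the same subset of $G$ but now ``closed under reduction'': $\phi(w)\in R$ if and only if some Bass--Serre reduced form of $\phi(w)$ is accepted by $\mathcal{A}^{\ast}$. Finally, deciding whether a reduced form of the given element is accepted is done by chasing accepting paths of $\mathcal{A}^{\ast}$ and solving, along the way, word-problem (hence coset-membership) instances in the vertex groups, which are decidable since the word problem reduces to the rational subset membership problem and $\mathcal{P}$ holds for each vertex group. Running the induction through (1)--(5) then gives $\mathcal{P}(G)$ for all $G\in\mathcal{C}$.

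\textbf{The hard part.} The delicate step is clearly the saturation in the graph-of-groups case: one must select the transitions to be added so that simultaneously (a) the subset of $G$ that is recognised does not change, (b) every reduced form of every accepted element eventually becomes readable along some accepting path, and (c) the procedure provably terminates. Securing (a)--(c) at once needs a careful normal-form theory for $\pi_1(\mathcal{G})$ that controls the ambiguity of reduced forms in terms of the vertex and edge groups, together with an explicit bound on the number of ``useful'' new transitions --- and it is precisely this bound, and hence termination, that breaks down without the finiteness of the edge groups (indeed, that is exactly why the $\times\Z$ construction is listed separately in (2) rather than absorbed into (5)). The analogous control of the blind counter in case (2) is comparatively mild by contrast.
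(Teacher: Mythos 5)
First, note that the paper itself does not prove this statement: it is quoted verbatim from Lohrey--Steinberg (their Theorem~1), so the comparison is with their proof. Measured against that, your proposal has a genuine gap, and it sits exactly where you declare the argument ``comparatively mild'': the closure step $G \mapsto G\times\Z$. Your plan is a structural induction whose invariant is plain decidability of the rational subset membership problem, but it is not known (and was explicitly left open by Lohrey--Steinberg) that this property is preserved under direct products with~$\Z$; this is precisely why the theorem is phrased in terms of the \emph{smallest} class closed under the five operations rather than as ``the class of groups with decidable rational subset membership is closed under (2)--(5)''. Concretely, your reduction does not go through: for an automaton over the generators of $G\times\Z$, a membership query asks for an accepting run whose $G$-label equals a prescribed $g$ \emph{and} whose counter total equals a prescribed $k$, and these two constraints are coupled along the same run. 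The set of counter values achievable by runs with $G$-label exactly $g$ is not something you can extract from finitely many independent rational-subset queries in $G$ plus Presburger conditions; deciding whether it contains $k$ is the whole problem, and ``rational subsets of $\Z$ are effectively semilinear'' gives no handle on this correlated set.

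The actual proof repairs this by strengthening the induction invariant: one shows that every group in $\mathcal{C}$ satisfies an effective semilinearity property (roughly, for every finite automaton over the generators, the Parikh image of the accepted words representing the identity --- equivalently, membership in rational subsets of $G\times\Z^{n}$ for all $n$, i.e.\ emptiness for $G$-automata with blind counters --- is effectively computable), a property that implies decidability of rational subset membership and, crucially, is stable under adjoining a central $\Z$-factor because the $\Z$-coordinate is a Parikh-level constraint. The price is that the remaining steps must then be proved for this \emph{stronger} property: in particular the graph-of-groups step, which for plain decidability was already known (Kambites--Silva--Steinberg, generalizing Benois; finite extensions are Grunschlag, as you say), has to be redone so that the Benois-type saturation preserves the semilinearity data, not just recognizability of the same subset. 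So your assessment of where the difficulty lies is inverted: your case (5), carried out for plain decidability, is essentially the known KSS argument, whereas your case (2) cannot be completed as sketched without changing the induction hypothesis, which in turn changes what must be proved in cases (3)--(5).
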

\begin{remark}
Submonoids are particular examples of rational subsets, hence groups belonging to the class~$\mathcal{C}$ referred in \autoref{thm: class of groups with decidable rational subset membership} have decidable submonoid membership problem.
\end{remark}
\begin{notation}
In this article we will use~$\mathcal{A}$ to denote the class of Artin groups. Now we define the following subclasses of~$\mathcal{A}$ to express our results.
\begin{itemize}
\item~$\mathcal{A}(Rat)$ denotes Artin groups with decidable rational subset membership problem.
\item~$\mathcal{A}(Mon)$ denotes Artin groups with decidable submonoid membership problem.
\item~$\mathcal{A}(SgSep)$ denotes Artin groups that are subgroup separable.
\item~$\mathcal{A}(P_4 \lor C_4)$ denotes Artin groups
that contain~$A(P_4)$ or~$A(C_4)$ as subgroups.
\end{itemize}
\end{notation}

\begin{remark}
If one denotes the class of RAAGs by~$\mathcal{RA}$, then one can make the analogous notation as above for subclasses of RAAGs.
\end{remark}

In \autoref{Review of results about RAAGs}, we provide a literature review of results about RAAGs that show:
\[
\mathcal{RA}(Rat) = \mathcal{RA}(Mon) = \mathcal{RA}(SgSep) = \mathcal{RA} \setminus \mathcal{RA}(P_4 \lor C_4).
\]
Moreover, a RAAG~$A(\Gamma)$ belongs to one of these classes if and only if~$\Gamma$ does not contain~$P_4$ or~$C_4$ as induced subgraphs.


In \autoref{Going from RAAGs to all Artin groups}, we use the equivalence of these decision problems in RAAGs and several observations to demonstrate:
\[
\mathcal{A}(Rat) = \mathcal{A}(Mon) = \mathcal{A}(SgSep) = \mathcal{A} \setminus \mathcal{A}(P_4 \lor C_4).
\]

Clearly one has~$\mathcal{A}(Rat) \subseteq \mathcal{A}(Mon)$ because submonoids are in particular rational subsets.

The containment~$\mathcal{A}(Mon) \subseteq \mathcal{A} \setminus \mathcal{A}(P_4 \lor C_4)$ follows from the fact that both RAAGs~$A(P_4)$ and~$A(C_4)$ have undecidable submonoid membership problem. 

The equivalence~$\mathcal{A}(SgSep) = \mathcal{A} \setminus \mathcal{A}(P_4 \lor C_4)$ is proved by Almeida -- Lima, in two papers.

Our main discussions include the characterization of~$\mathcal{A}(P_4 \lor C_4)$ and its complement. We also discuss thoroughly the containment
\[
\mathcal{A} \setminus \mathcal{A}(P_4 \lor C_4) \subseteq \mathcal{A}(Rat).
\]
We provide a proof with a slightly different flavour (see \autoref{thm: when we have decidable RSMP}) to the equivalent result, provided by Almeida -- Lima, about the containment~$\mathcal{A} \setminus \mathcal{A}(P_4 \lor C_4) \subseteq \mathcal{A}(SgSep)$.

\section{Review of results about RAAGs}\label{Review of results about RAAGs}
This section is only about RAAGs, some of their properties, and decision problems in them. The main result appearing in this section is the following:
\begin{theorem}\label{thm: main theorem for RAAGs}
One has~$\mathcal{RA}(Rat) = \mathcal{RA}(Mon) = \mathcal{RA}(SgSep) = \mathcal{RA} \setminus \mathcal{RA}(P_4 \lor C_4)$. Moreover, a RAAG~$A(\Gamma)$ belongs to one of these classes if and only if~$\Gamma$ does not contain both~$P_4$ and~$C_4$ as induced subgraphs.
\end{theorem}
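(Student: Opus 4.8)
The statement is a synthesis: the plan is to show that all four classes occurring in it coincide with the single class
\[
\mathcal{F} \coloneqq \{\, A(\Gamma) \in \mathcal{RA} \;:\; \Gamma \text{ contains no induced subgraph isomorphic to } P_4 \text{ or to } C_4 \,\},
\]
so that the displayed chain of equalities, and the ``moreover'' clause, follow at once. Two of the four equalities are imported verbatim from the literature: $\mathcal{RA}(Rat) = \mathcal{RA}(Mon) = \mathcal{F}$ is the theorem of Lohrey--Steinberg \citep{lohrey2008submonoid}, and $\mathcal{RA}(SgSep) = \mathcal{F}$ is the theorem of Metaftsis--Raptis \citep{metaftsis2008profinite}. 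Hence the only thing genuinely to be checked in this section is the equality $\mathcal{RA} \setminus \mathcal{RA}(P_4 \lor C_4) = \mathcal{F}$.

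For the inclusion $\mathcal{F} \subseteq \mathcal{RA} \setminus \mathcal{RA}(P_4 \lor C_4)$ the plan is to use decidability rather than combinatorics: if $A(\Gamma) \in \mathcal{F}$ then $A(\Gamma)$ has decidable submonoid membership problem by \citep{lohrey2008submonoid}; this property is inherited by finitely generated subgroups, while $A(P_4)$ and $A(C_4)$ both have \emph{undecidable} submonoid membership problem (again \citep{lohrey2008submonoid}, the undecidability for $A(C_4) \cong F_2 \times F_2$ being classical). Therefore $A(\Gamma)$ can contain no copy of $A(P_4)$ or of $A(C_4)$, i.e.\ $A(\Gamma) \notin \mathcal{RA}(P_4 \lor C_4)$. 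For the reverse inclusion I would argue contrapositively: if $\Gamma$ has an induced copy of $P_4$ or $C_4$ on a vertex set $S$, then by \citep{van1983homotopy} the standard parabolic subgroup $A_S$ is isomorphic to $A(P_4)$ or to $A(C_4)$, whence $A(\Gamma) \in \mathcal{RA}(P_4 \lor C_4)$. This closes the last equality, and with it the theorem.

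Because the later sections run the analogous argument for general Artin groups, I would additionally give a self-contained proof of the positive half --- ``$A(\Gamma) \in \mathcal{F}$ implies decidable rational subset membership problem and subgroup separability'' --- rather than quote the hard half of \citep{lohrey2008submonoid}. For this I would invoke the classical description (Wolk) of graphs with no induced $P_4$ or $C_4$: they are exactly the graphs built from the one-vertex graph by repeatedly taking disjoint unions and adding a single vertex adjacent to all existing vertices; equivalently, every connected induced subgraph has a universal vertex. On the level of RAAGs these two operations are free product and direct product with $\Z$, so an induction on $|V\Gamma|$ places $A(\Gamma)$ in the class $\mathcal{C}$ of \autoref{thm: class of groups with decidable rational subset membership} through clauses (1), (2) and (5), giving decidable rational subset membership; the same induction gives subgroup separability, using that the trivial group (and $\Z$) are separable and that subgroup separability is preserved under free products and under direct product with $\Z$. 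The only points needing care in the write-up are: keeping the argument for $\mathcal{F} \subseteq \mathcal{RA} \setminus \mathcal{RA}(P_4 \lor C_4)$ non-circular (it uses decidability of submonoid membership, established independently of the ``$P_4 \lor C_4$'' formulation, to rule out the forbidden subgroups), and, in the self-contained route, quoting the graph-theoretic characterisation correctly and matching the two RAAG constructions to ``disjoint union'' and ``cone a single vertex''.
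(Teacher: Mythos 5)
Your proposal is correct, and it reaches the theorem by a slightly different route than the paper. The paper splits the work as follows: undecidability in $A(C_4)$ (Mikhailova) and in $A(P_4)$ (Lohrey--Steinberg) gives $\mathcal{RA}(Rat),\mathcal{RA}(Mon)\subseteq \mathcal{RA}\setminus\mathcal{RA}(P_4\lor C_4)$; the identification of $\mathcal{RA}(P_4\lor C_4)$ with the graph condition is done via the retraction lemma (\autoref{lem: injective graphs injective raags}) in one direction and the embedding theorems of Kambites (for $C_4$) and Kim--Koberda (for $P_4$) in the other; the positive half uses the description of elementary RAAGs as the closure of the trivial group under $\times\Z$ and free products, placed inside the class $\mathcal{C}$ of \autoref{thm: class of groups with decidable rational subset membership}; and $\mathcal{RA}(SgSep)$ is handled by Metaftsis--Raptis. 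You instead treat the Lohrey--Steinberg equivalence $\mathcal{RA}(Rat)=\mathcal{RA}(Mon)=\mathcal{F}$ as a black box and prove $\mathcal{F}\subseteq\mathcal{RA}\setminus\mathcal{RA}(P_4\lor C_4)$ by a decidability obstruction: decidable submonoid membership passes to finitely generated subgroups, while $A(P_4)$ and $A(C_4)$ fail it. This cleverly avoids citing Kambites and Kim--Koberda at all (the paper needs them precisely to rule out ``hidden'' copies of the poisonous groups), at the cost of leaning on the hard positive half of Lohrey--Steinberg; the paper's route yields the sharper structural statement (an embedding $A(P_4)\hookrightarrow A(\Gamma)$ or $A(C_4)\hookrightarrow A(\Gamma)$ forces the induced subgraph), which your argument recovers only as a corollary of the class equalities. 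Both arguments are sound, and your contrapositive direction via standard parabolic subgroups coincides with the paper's \autoref{lem: injective graphs injective raags}.

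One caveat on your optional ``self-contained'' addendum: the rational-subset half is fine and is exactly the paper's induction through clauses (1), (2), (5) of \autoref{thm: class of groups with decidable rational subset membership}, but the separability half quietly invokes ``subgroup separability is preserved under direct product with $\Z$'' as if it were a standard closure property. Preservation under free products is a genuine theorem (Burns, Romanovskii), but preservation under $\times\Z$ does not follow formally from subgroup separability (one must induce finite-index subgroups of a finitely generated subgroup $H$, such as $\phi^{-1}(m\Z)$ for $\phi\colon H\to\Z$, from finite quotients of the ambient group, which separability of $H$ alone does not provide); establishing it for elementary RAAGs is essentially the content of the Metaftsis--Raptis positive direction. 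So either cite their theorem for that step, as your main argument already does, or supply the extra argument rather than appealing to a general closure property.
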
 
We will provide a summary of the theorem above, by gathering the appropriate results from literature about RAAGs, regarding each equality.
\begin{theorem}[\citealp{mikhailova1966occurrence}]
The RAAG~$A(C_4) = F_2 \times F_2$ contains a subgroup where membership is undecidable.
\end{theorem}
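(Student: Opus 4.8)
The plan is to build, inside a direct product of two free groups, Mikhailova's fibre--product subgroup, and then carry it over to $A(C_4)$. First note that $A(C_4)\cong F_2\times F_2$: writing the vertices of $C_4$ cyclically as $a,b,c,d$, the only non-adjacent pairs are $\{a,c\}$ and $\{b,d\}$, while each of $a,c$ commutes with each of $b,d$, so $\langle a,c\rangle\cong F_2$ and $\langle b,d\rangle\cong F_2$ are commuting subgroups generating $A(C_4)$, whence $A(C_4)=\langle a,c\rangle\times\langle b,d\rangle\cong F_2\times F_2$. Since $F_n\leqslant F_2$ for every $n\geqslant 2$, we have $F_n\times F_n\leqslant F_2\times F_2\cong A(C_4)$; and for a fixed subgroup $S\leqslant F_n\times F_n$, decidability of the membership problem for $S$ within $A(C_4)$ would imply its decidability within $F_n\times F_n$ (run the $A(C_4)$-algorithm on the images of input words over the generators of $F_n\times F_n$). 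Hence it is enough to exhibit, for some $n$, a finitely generated $S\leqslant F_n\times F_n$ whose membership problem within $F_n\times F_n$ is undecidable.

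Fix, by Novikov--Boone, a finitely presented group $G=\langle x_1,\dots,x_n\mid r_1,\dots,r_m\rangle$ with undecidable word problem, set $F\coloneqq F(x_1,\dots,x_n)$, let $\pi\colon F\times F\to G\times G$ be the canonical quotient and $\Delta=\{(g,g):g\in G\}$. Define
\[
H\;\coloneqq\;\bigl\langle\,(x_i,x_i)\ \ (1\leqslant i\leqslant n),\qquad (r_j,1)\ \ (1\leqslant j\leqslant m)\,\bigr\rangle\;\leqslant\;F\times F,
\]
which is finitely generated precisely because $R$ is finite (this is where finite presentability of $G$, rather than mere recursive presentability, is used). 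I claim $H=\pi^{-1}(\Delta)=\{(u,v)\in F\times F:u=_G v\}$. The inclusion $\subseteq$ is immediate. For $\supseteq$: every $(w,w)$ with $w\in F$ lies in $H$ (it is a product of the $(x_i,x_i)^{\pm 1}$), hence so does $(1,r_j)=(r_j,1)^{-1}(r_j,r_j)$ for each $j$, and therefore $(1,s)\in H$ for every $s$ in the normal closure $N$ of $\{r_1,\dots,r_m\}$ in $F$; since $u=_G v$ is equivalent to $u^{-1}v\in N$, we get $(u,v)=(u,u)(1,u^{-1}v)\in H$.

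Granting the claim, $(w,1)\in H$ if and only if $w=_G 1$, so any algorithm deciding membership in $H$ within $F\times F$ would solve the word problem of $G$, which is impossible; thus $H$ is a finitely generated subgroup of $F_n\times F_n$ with undecidable membership problem, and by the first paragraph $A(C_4)\cong F_2\times F_2$ contains such a subgroup as well. I do not expect a genuine obstacle, the construction being classical; the only step requiring care is the inclusion $\pi^{-1}(\Delta)\subseteq H$, i.e.\ verifying that the few obvious elements already generate the whole fibre product, which rests on $G$ being finitely presented and on $F$ being free (so that $N$ is generated by $F$-conjugates of the $r_j^{\pm 1}$).
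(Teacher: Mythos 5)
Your argument is correct and is exactly the classical Mikhailova construction that the paper invokes by citation (the paper gives no proof of its own, only the reference): the fibre product $H=\pi^{-1}(\Delta)\leqslant F\times F$ over a Novikov--Boone group, together with the identification $A(C_4)\cong F_2\times F_2$ and the embedding $F_n\times F_n\leqslant F_2\times F_2$. All steps, including the verification that the listed generators produce the whole fibre product and the reduction of the word problem of $G$ to membership in $H$, are sound.
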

As subgroups are particular examples of rational subsets, one has that~$A(C_4)$ contains a rational subset where membership is undecidable.

On the other hand~$A(P_4)$ has decidable subgroup membership problem, see \citep*[Corollary 1.3]{kapovich2005foldings}, as~$P_4$ is a chordal graph. However, one has other undecidable results for~$A(P_4)$.
\begin{theorem}[{\citealp[Theorem 5]{lohrey2008submonoid}}]
The RAAG~$A(P_4)$ contains a submonoid where membership is undecidable.
\end{theorem}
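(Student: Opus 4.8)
The plan is to prove undecidability by reducing from the halting problem for deterministic two‑counter (Minsky) machines, which is undecidable already for machines started with both counters equal to $0$ and asked whether they ever reach a distinguished halting state. Since $A(P_4)$ has decidable subgroup membership, the reduction must exploit the one‑directional nature of submonoids: a run of the machine is a forward computation, so it should be mirrored by \emph{left‑to‑right} products of a fixed finite set $S$ of group elements, and "the machine halts" will become "a fixed target element $h$ lies in the submonoid $S^{\ast}$".

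Write $P_4$ as the path $a-b-c-d$; then the only commuting pairs of standard generators are $\{a,b\}$, $\{b,c\}$, $\{c,d\}$, so $A_{\{b,c\}}\cong\Z^2$ and $A_{\{a,d\}}\cong F_2$. I would encode a configuration in state $q$ with counter contents $(m,n)$ by the element $\gamma(q;m,n)=b^{m}c^{n}w_q$, where $q\mapsto w_q$ assigns pairwise distinct reduced words in $A_{\{a,d\}}$ to the finitely many states. Because $b$ commutes with $c$ but neither $a$ nor $d$ does, these elements have a rigid canonical (trace / heap‑of‑pieces) normal form. Incrementing or decrementing counter $1$ is realized by right multiplication by $w_q^{-1}b^{\pm1}w_{q'}$, counter $2$ by $w_q^{-1}c^{\pm1}w_{q'}$, and a plain state change by $w_q^{-1}w_{q'}$ — each of these is independent of $m,n$ because $b$ and $c$ commute. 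The delicate instruction is "if counter $1$ is zero then $\ldots$": here one uses that the control letter $d$ slides through any power of $c$ but is blocked by any power of $b$, so a carefully chosen generator can be absorbed cleanly into $\gamma(q;m,n)$ only when $m=0$. Engineering this gadget so that it fires in exactly that case is the heart of the construction, and — as one should expect, since $A(P_3)=F_2\times\Z$ already has decidable submonoid membership — it is where the particular shape of $P_4$ is really used. Finally one adds a few generators that, in the halting state, drain both counters, and sets $h=w_{q_{\mathrm{halt}}}$.

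With the encoding in place, a halting run translates step by step into a product of generators of $S$ equal to $h$, giving one implication. The converse — that $h\in S^{\ast}$ forces a genuine halting run — is the main obstacle. I would prove it by working in the canonical normal form of $A(P_4)$ and establishing, by induction on the number of factors, an invariant of the form: every element of $S^{\ast}$ is either $\gamma(q;m,n)$ for a configuration reachable by the machine, or is "dead", meaning its normal form contains a pattern that no generator of $S$ can ever act on (a control letter trapped among the counter letters, or two control letters in the wrong relative order). The asymmetry $a\not\leftrightarrow c$ and $d\not\leftrightarrow b$ is precisely what keeps control letters from crossing one another or slipping past the wrong counter, so no product can "fake" a zero‑test. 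Strengthening this invariant enough that it is preserved under right multiplication by an arbitrary generator while still excluding every unintended interaction is the step I expect to demand the most care, and the zero‑test gadget of the previous paragraph has to be designed hand‑in‑hand with it. (Alternatively one can reduce from Post's Correspondence Problem, storing the overhang of a partial match in $A_{\{a,c\}}\cong F_2$ and using powers of $b$ to certify its validity; the normal‑form bookkeeping required is of the same nature.)
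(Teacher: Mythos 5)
The paper itself gives no proof of this statement: it is imported directly from Lohrey--Steinberg, so your sketch can only be judged on its own terms. In spirit it is aimed the right way (an undecidable computation problem encoded into one-directional products that exploit exactly the commutations $ab$, $bc$, $cd$ and the non-commutations $ac$, $bd$ of $P_4$), but as written it is a plan rather than a proof, and the two places where all the content lives are explicitly deferred. First, the zero-test gadget is never constructed: you assert that some generator can be ``absorbed cleanly'' into $b^{m}c^{n}w_q$ only when $m=0$, using that $d$ commutes with $c$ but not with $b$, but you neither exhibit it nor show that an illegal application (with $m>0$) is irrevocable. Irrevocability is precisely the danger: later factors of the product can cancel among themselves, and if the residue separating a trapped $d$ from a subsequently appended $d^{-1}$ happens to consist only of letters commuting with $d$, the trapped letter is freed and the zero test is faked. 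Excluding this forces the state words $w_q$, the test gadget, and the draining generators (which are available at every moment, not only in the halting state) to be designed together with a normal-form invariant that is verified under right multiplication by \emph{every} generator; none of that is carried out, and it is the entire difficulty of the theorem, as you yourself acknowledge.

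Second, even granting the gadget, your reduction proves a weaker statement than the one asserted. You fix the start configuration $(0,0)$ and let the machine vary, so each machine produces its own finite set $S$ and a single query ``$h\in S^{\ast}$?''; a family of such instances shows undecidability of the \emph{uniform} submonoid membership problem for $A(P_4)$, not the existence of one fixed finitely generated submonoid whose membership problem is undecidable, which is what the theorem claims. To repair this you must fix a universal machine (hence a fixed $S$) and move the variable input into the queried element, e.g.\ ask whether $w_{q_0}^{-1}b^{-n}w_{q_{\mathrm{halt}}}$ lies in $S^{\ast}$, and the invariant then has to be proved relative to these varying inputs. A small further point: the parenthetical PCP variant cannot work as stated, since $b$ commutes with both $a$ and $c$, so powers of $b$ cannot ``certify the validity'' of an overhang stored in $A_{\{a,c\}}$ --- at best they record a length.
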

In particular,~$A(P_4)$ contains a rational subset where membership is undecidable.

As implications of the results above, one has:
\[
\mathcal{RA}(Rat), \, \mathcal{RA}(Mon) \subseteq \mathcal{RA} \setminus \mathcal{RA}(P_4 \lor C_4).
\]
Let~$A(\Gamma)$ be a RAAG based on the simplicial graph~$\Gamma$. The following two results demonstrate that~$A(\Gamma)$ can have~$A(P_4)$, respectively~$A(C_4)$, as a subgroup if and only if one sees~$P_4$, respectively~$C_4$, as an induced subgraph of~$\Gamma$. One direction of this result is clear, since:
\begin{lemma}\label{lem: injective graphs injective raags}
If~$\Delta$ is an induced subgraph of the simplicial graph~$\Gamma$ then the map on RAAGs
\[
i \colon A(\Delta) \longrightarrow A(\Gamma), \text{ given by } u \mapsto u \text{ for all } u \in V\Delta,
\]
defines an injective morphism. 
\end{lemma}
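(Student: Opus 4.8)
The plan is to construct an explicit retraction witnessing that $i$ is a split injection. Let $\Delta$ be an induced subgraph of $\Gamma$, so that $V\Delta \subseteq V\Gamma$ and a pair $\{u,v\} \subseteq V\Delta$ lies in $E\Delta$ if and only if it lies in $E\Gamma$. First I would define a map $r \colon A(\Gamma) \to A(\Delta)$ on generators by $r(u) = u$ for $u \in V\Delta$ and $r(u) = 1$ for $u \in V\Gamma \setminus V\Delta$. To see this extends to a group homomorphism, I only need to check that every defining relator of $A(\Gamma)$ is sent to a relator (or a trivial word) in $A(\Delta)$: the relator associated to an edge $\{u,v\} \in E\Gamma$ is $uvu^{-1}v^{-1}$, and under $r$ this maps to $uvu^{-1}v^{-1}$ if both $u,v \in V\Delta$ — in which case $\{u,v\} \in E\Delta$ since $\Delta$ is \emph{induced}, so this is a relator of $A(\Delta)$ — and it maps to the empty word (or to $vv^{-1}$, $uu^{-1}$) if at least one of $u,v$ is outside $V\Delta$. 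Hence $r$ is well defined.

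Next I would observe that $r \circ i \colon A(\Delta) \to A(\Delta)$ fixes every generator $u \in V\Delta$, since $i(u) = u$ and $r(u) = u$, and therefore $r \circ i = \mathrm{id}_{A(\Delta)}$. A homomorphism admitting a left inverse is injective, so $i$ is injective, as claimed. It remains only to confirm that $i$ itself is a homomorphism, which is the content of the defining relation check for $A(\Delta)$: each edge relator $uvu^{-1}v^{-1}$ with $\{u,v\} \in E\Delta$ maps to $uvu^{-1}v^{-1}$, and $\{u,v\} \in E\Gamma$ because $E\Delta \subseteq E\Gamma$ (this direction of ``induced'' is the easy one), so it is a relator of $A(\Gamma)$; thus $i$ is a well-defined homomorphism.

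The only subtlety — and the single place where the hypothesis ``induced'' is essential rather than cosmetic — is the well-definedness of the retraction $r$: if $\Delta$ were merely a subgraph with $E\Delta \subsetneq E\Gamma|_{V\Delta}$, there could be an edge $\{u,v\} \in E\Gamma$ with $u,v \in V\Delta$ but $\{u,v\} \notin E\Delta$, and then $r$ would be forced to kill the commutator $uvu^{-1}v^{-1}$ in $A(\Delta)$, which need not be trivial there; the map would fail to be defined and the splitting argument would collapse. So I would be careful to invoke the induced-subgraph hypothesis precisely at that step. Everything else is a routine verification on presentations, and no deeper structure theory of RAAGs is needed.
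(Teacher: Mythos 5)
Your argument is correct and is essentially the paper's proof: both construct the retraction $r \colon A(\Gamma) \to A(\Delta)$ sending $u \mapsto u$ for $u \in V\Delta$ and all other generators to $1$, and conclude injectivity of $i$ from $r \circ i = \mathrm{id}_{A(\Delta)}$. Your extra care in checking well-definedness of $r$ (which is where the induced-subgraph hypothesis is used) just makes explicit what the paper leaves as routine.
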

\begin{proof}
This can be seen by considering the retraction morphism:
\[
\rho \colon A(\Gamma) \longrightarrow A(\Delta), \text{ induced by } u \mapsto u \text{ for } u \in V\Delta, \text{ and } v \mapsto 1 \text{ for } v \in V\Gamma \setminus V\Delta,
\]
which satisfies~$\rho \circ i = id_{A(\Delta)}$; showing that~$i$ is indeed injective. 
\end{proof}
As a converse to the statement above, one has the following two results.
\begin{theorem}[{\citealp[Corollary 3.8]{kambites2009commuting}}]
The RAAG~$A(\Gamma)$ contains a subgroup isomorphic to~$A(C_4)$ if and only if~$\Gamma$ contains an induced square~$C_4$.
\end{theorem}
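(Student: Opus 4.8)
\emph{Proposal.} The ``if'' direction is immediate from \autoref{lem: injective graphs injective raags}: an induced square on vertices $a,b,c,d$ (cyclically ordered) yields, via the vertex inclusion, an embedding $A(C_4)\hookrightarrow A(\Gamma)$. So the content is the converse. Suppose $A(C_4)\cong F\times F'$ embeds in $A(\Gamma)$, with $F=\langle x_1,x_2\rangle$ and $F'=\langle y_1,y_2\rangle$ free of rank $2$; let $P,Q\le A(\Gamma)$ be the images of $F,F'$. The first thing I would record is the ``commutation $K_{2,2}$'': $[x_i,y_j]=1$ for all $i,j$, while $[x_1,x_2]\ne1$ and $[y_1,y_2]\ne1$; moreover, since $P\cap Q=1$ and $P$ (being free) is torsion-free, each $\langle x_i,y_j\rangle$ is a copy of $\Z^2$ — if $x_i$ and $y_j$ were both powers of a common element, a suitable common power would lie in $P\cap Q=1$.

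Recall that $C_4$ is the complete bipartite graph $K_{2,2}$, so the target is to produce four distinct vertices $a,c$ and $b,d$ of $\Gamma$ with $a\not\sim c$, $b\not\sim d$, and each of $a,c$ adjacent to each of $b,d$. The plan is to pass from elements to vertices via supports and parabolic closures. Let $\mathrm{Pc}(P)$ and $\mathrm{Pc}(Q)$ denote the smallest parabolic subgroups of $A(\Gamma)$ containing $P$, respectively $Q$; each is a conjugate of a standard parabolic, say of $A_{S_P}$ and of $A_{S_Q}$. Since $P$ is non-abelian it lies in no abelian parabolic, so $\Gamma[S_P]$ is not complete, which gives non-adjacent vertices $a,c\in S_P$; likewise one obtains non-adjacent $b,d\in S_Q$.

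The heart of the argument is to upgrade the relation $[P,Q]=1$ to a statement about these parabolic closures: after a single simultaneous conjugation, $\mathrm{Pc}(P)$ and $\mathrm{Pc}(Q)$ can be taken to be the \emph{standard} parabolics $A_{S_P},A_{S_Q}$ with $S_P\cap S_Q=\emptyset$ and every vertex of $S_P$ adjacent to every vertex of $S_Q$ (equivalently, $A_{S_P\cup S_Q}=A_{S_P}\times A_{S_Q}$ is a join block of $A(\Gamma)$ containing $P\times Q$). Granting this, the vertices $a,c\in S_P$ together with $b,d\in S_Q$ are four distinct vertices inducing precisely a square, and the proof is complete.

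The main obstacle is exactly this upgrade: showing that the commutation of $P$ and $Q$ genuinely reflects a join in $\Gamma$ rather than arising from cancellation in words. I would attack it either by invoking the structure theory of parabolic subgroups of RAAGs — existence and uniqueness of parabolic closures and their interaction with centralizers, so that $C_{A(\Gamma)}(P)$ is controlled by $\mathrm{Pc}(P)$ and the link of its support — or, more concretely, with cyclically reduced normal forms: conjugate so that $x_1$ is cyclically reduced with support $T_1$, apply Servatius' centralizer theorem to place the non-abelian group $Q$ inside a ``link part'' $A_L$ with $L$ disjoint from and completely joined to $T_1$, and then run the symmetric argument with $x_2$ and with the roles of $P$ and $Q$ exchanged to pin down $S_P$ and $S_Q$. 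The delicate case in this second route is when $\mathrm{supp}(x_1)$ is disconnected, so that $x_1$ itself splits as a non-trivially commuting product; dealing with that (and analogous degeneracies) is where most of the care will be needed.
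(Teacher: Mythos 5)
There is a genuine gap, and in fact the key intermediate claim you propose is false as stated. First, for context: the paper offers no proof of this statement at all --- the hard (``only if'') direction is exactly the cited result of Kambites, so everything after your first paragraph is the actual content, and you do not prove it: you explicitly defer the ``upgrade'' step (commutation of $P$ and $Q$ forces a join structure on their parabolic closures) to two possible attack routes, with an acknowledged delicate case left open. That step is the whole theorem, so the proposal is a plan rather than a proof.

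More seriously, the upgrade as you formulate it --- that after one conjugation $\mathrm{Pc}(P)=A_{S_P}$ and $\mathrm{Pc}(Q)=A_{S_Q}$ with $S_P\cap S_Q=\emptyset$ and $S_P$ completely joined to $S_Q$ --- is not true. Take $\Gamma$ to be the cone over $C_4$: vertices $a,b,c,d$ forming the square plus a vertex $e$ adjacent to all of them, so $A(\Gamma)\cong \Z\times A(C_4)$. Put $P=\langle ea,\,ec\rangle$ and $Q=\langle eb,\,ed\rangle$. Each is free of rank $2$ (non-abelian since $[ea,ec]=[a,c]\neq 1$), they commute elementwise, and $P\cap Q=1$ (project away $e$), so $P\times Q\cong F_2\times F_2$. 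But any parabolic subgroup containing $P$ must have $e$ in the support of its type, because exponent sums are conjugation-invariant and $ea$ has $e$-exponent $1$; the same holds for $Q$. Hence the supports of the two parabolic closures always share $e$ and can never be made disjoint, let alone completely joined. The conclusion of the theorem still holds here (the cone contains an induced $C_4$), which shows your intermediate claim is strictly stronger than the theorem and fails; the correct structural statement must allow a common clique/central part, and extracting an induced square from that weaker statement requires choosing the non-adjacent pairs in $S_P$ and $S_Q$ compatibly --- precisely the finer analysis of commuting elements (via cyclically reduced supports and Servatius' Centralizer Theorem, including your ``disconnected support'' degeneracy) that the cited proof of Kambites actually carries out. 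As it stands, your argument would not go through even if the deferred step were attempted along the stated lines.
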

\begin{theorem}[{\citealp[Theorem 1.7]{kim2013embedability}}]
If~$A(P_4) \xhookrightarrow{} A(\Gamma)$ is a group embedding then~$\Gamma$ contains an induced path~$P_4$.
\end{theorem}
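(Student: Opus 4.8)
The statement to prove is Kim–Koberda's embeddability result: if $A(P_4)$ embeds into a RAAG $A(\Gamma)$, then $\Gamma$ contains an induced $P_4$. The plan is to argue by contrapositive, using the structure theory of RAAGs that avoid an induced $P_4$. The key input is the classical fact that a finite simplicial graph $\Gamma$ has no induced $P_4$ if and only if $\Gamma$ is a \emph{cograph}, i.e.\ it is built from single vertices by iterating two operations: disjoint union $\Gamma_1 \sqcup \Gamma_2$ and join $\Gamma_1 * \Gamma_2$. On the level of RAAGs this translates precisely into: $A(\Gamma)$ is obtained from copies of $\mathbb{Z}$ by iterating free products and direct products. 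So the reduction is: show that no group in the smallest class $\mathcal{D}$ closed under free and direct products and containing $\mathbb{Z}$ can contain a subgroup isomorphic to $A(P_4)$.

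First I would isolate an obstruction invariant of $A(P_4)$ that behaves well under both operations. A natural candidate is \emph{coherence} or the structure of centralizers, but the cleanest route is via a theorem of Kim (on co-contractions / extension graphs) or, more elementarily, via the observation that $A(P_4)$ is \emph{not} a \emph{poison-free} RAAG in the sense of having no $\overline{P_4}$ pattern — but to keep the argument self-contained I would instead use the following: $A(P_4)$ contains no direct product decomposition $G_1 \times G_2$ with both factors nontrivial (its center is trivial and it is directly indecomposable, which one checks from the fact that $P_4$ is connected and not a join), and it is freely indecomposable (since $P_4$ is connected). Then I would run an induction on the cograph construction: if $H \leqslant A(\Gamma_1) * A(\Gamma_2)$ and $H \cong A(P_4)$, then by Kurosh's subgroup theorem $H$ is a free product of a free group with conjugates of subgroups of the factors; since $A(P_4)$ is freely indecomposable and not infinite cyclic, $H$ must be conjugate into one of the factors, so $A(P_4) \hookrightarrow A(\Gamma_i)$ for some $i$. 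If instead $H \leqslant A(\Gamma_1) \times A(\Gamma_2)$, I would use the projections $\pi_i \colon A(\Gamma_1)\times A(\Gamma_2) \to A(\Gamma_i)$ together with the structure of subdirect products; the subtlety is that $H$ need not split as a product, so one needs a genuinely RAAG-theoretic input here.

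The hard part is exactly this direct-product step — a subgroup of $A(\Gamma_1)\times A(\Gamma_2)$ need not be a product of subgroups, and $A(P_4)$ could in principle sit diagonally. To handle it I would invoke the \emph{centralizer} structure of RAAGs: in any RAAG, centralizers of elements are themselves (parabolic-by-abelian, hence) direct products of cyclic groups with smaller RAAGs, and more importantly the join/product decomposition of a RAAG is detected by its abelianization together with the pattern of which generators commute. Concretely, in $A(P_4) = \langle a,b,c,d \mid [a,b],[b,c],[c,d]\rangle$ one finds two elements $x = a$ and $y = d$ whose centralizers intersect in $\langle b \rangle \cap \langle c \rangle$-type data forcing a non-product configuration; projecting such a configuration to the factors of $A(\Gamma_1)\times A(\Gamma_2)$ and tracking commutation would show it cannot be realized unless the whole of $A(P_4)$ already lands in a single factor (up to finite index, which one then upgrades using that $A(P_4)$ has trivial center and is not virtually a product). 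Once the induction is set up, the base case $\Gamma$ a single vertex is trivial since $\mathbb{Z}$ contains no copy of $A(P_4)$, and the conclusion follows: an embedding $A(P_4)\hookrightarrow A(\Gamma)$ with $\Gamma$ a cograph is impossible, so any $\Gamma$ admitting such an embedding has an induced $P_4$. The main obstacle, to reiterate, is making the direct-product inductive step rigorous; the cleanest fix is to cite the relevant structural lemma on subgroups of products of RAAGs (or equivalently Kim–Koberda's original argument via extension graphs), rather than reproving it here.
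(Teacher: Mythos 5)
The paper does not prove this statement at all: it is quoted verbatim from Kim--Koberda, so your sketch has to stand on its own. The reduction you set up is fine as far as it goes: $P_4$-free graphs are exactly cographs, so it suffices to show that $A(P_4)$ embeds into no group obtained from $\mathbb{Z}$ by iterated free and direct products; and your free-product step is correct, since $P_4$ is connected, hence $A(P_4)$ is freely indecomposable and not cyclic, so Kurosh forces it into a conjugate of a factor.

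The genuine gap is the direct-product step, and that step is the entire content of the theorem. Already the first nontrivial instance is $\Gamma = C_4$: one must show $A(P_4)$ does not embed into $F_2 \times F_2$, and nothing in your sketch accomplishes this. Direct indecomposability and trivial center of $A(P_4)$ are much too weak, because a group can embed into $G_1 \times G_2$ as a subdirect subgroup without embedding into either factor, so the induction does not pass to the factors by projecting. The centralizer configuration you invoke (``$\langle b\rangle \cap \langle c\rangle$-type data'') is not a precise statement and does not visibly obstruct such a subdirect embedding, and the auxiliary claim that $A(P_4)$ is ``not virtually a product'' is itself unproved and of comparable difficulty to what you want. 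A correct route through this step would be: for $H \cong A(P_4) \leqslant G_1 \times G_2$ set $N_i = H \cap G_i$; if some $N_i$ is trivial the complementary projection is injective and induction applies, and otherwise $A(P_4)$ would contain two nontrivial elementwise-commuting normal subgroups, which must be ruled out by a real structural input (for instance acylindrical hyperbolicity of RAAGs on connected non-join graphs, which forces centralizers of nontrivial normal subgroups to be trivial in the torsion-free case, or Kim--Koberda's extension-graph argument). As written, you explicitly defer exactly this step to ``citing the relevant structural lemma'' or to Kim--Koberda themselves, which makes the proposal circular as a proof of the statement; it is a correct plan plus a citation of the hard part, not a proof.
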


\begin{definition}\label{def: elementary RAAGs}
Simplicial graphs not containing~$P_4$ and~$C_4$ are called \emph{transitive forests}. Right-angled Artin groups based on transitive forests are called \emph{elementary RAAGs}.
\end{definition}

The result \citep[Lemma 2]{lohrey2008submonoid} shows that the class of elementary RAAGs consists precisely of the subclass~$\mathcal{RE}$ of RAAGs with the following properties:
\begin{itemize}
\item[(i)] The trivial group~$1$ is in~$\mathcal{RE}$,
\item[(ii)] If~$E \in \mathcal{E}$, then~$E \times \Z \in \mathcal{RE}$,
\item[(iii)] If~$E_1, E_2 \in \mathcal{E}$ then~$E_1 \ast E_2 \in \mathcal{RE}$, i.e.~$\mathcal{E}$ is closed under free products.
\end{itemize}

\begin{remark}
Note that~$\mathcal{RE}$ is a subclass of the class of groups~$\mathcal{C}$ referred to in \autoref{thm: class of groups with decidable rational subset membership}. Hence one has that the class of elementary RAAGs has decidable rational subset membership problem. Furthermore, since submonoids are rational subsets, we obtain:
\[
\mathcal{RA} \setminus \mathcal{RA}(P_4 \lor C_4) \subseteq \mathcal{RA}(Rat) \subseteq  \mathcal{RA}(Mon).
\]
Since~$\mathcal{RA}(Mon) \subseteq \mathcal{RA}(P_4 \lor C_4)$, one has a full equality:
\[
\mathcal{RA}(Rat) = \mathcal{RA}(Mon) = \mathcal{RA} \setminus \mathcal{RA}(P_4 \lor C_4).
\]
\end{remark}
Now the equality of the above three with~$\mathcal{RA}(SgSep)$ follows from the characterization of RAAGs that are subgroup separable, provided by the following:
\begin{theorem}[{\citealp[Theorem 2]{metaftsis2008profinite}}]
A RAAG~$A(\Gamma)$ is subgroup separable if and only if~$\Gamma$ does not contain~$P_4$ or~$C_4$ as induced subgraphs, i.e. if and only if~$A(\Gamma)$ does not contain~$A(P_4)$ or~$A(C_4)$ as subgroups.
\end{theorem}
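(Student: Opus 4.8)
I would prove the two implications separately. By the material already assembled in the excerpt it is enough to treat the graph-theoretic formulation: ``$\Gamma$ contains neither an induced $P_4$ nor an induced $C_4$'' is precisely the definition of $\Gamma$ being a transitive forest (\autoref{def: elementary RAAGs}), and the equivalence with the subgroup formulation is supplied by \autoref{lem: injective graphs injective raags} in one direction and by the theorems of Kambites \citep{kambites2009commuting} and Kim \citep{kim2013embedability} in the other. So it suffices to show: (a) if $\Gamma$ is a transitive forest then $A(\Gamma)$ is subgroup separable; and (b) neither $A(P_4)$ nor $A(C_4)$ is subgroup separable.

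For (a) I would induct on the construction of $A(\Gamma)$ as an elementary RAAG. By the description of the class $\mathcal{RE}$ recalled after \autoref{def: elementary RAAGs}, every elementary RAAG is built from the trivial group by finitely many operations $E \mapsto E \times \Z$ and $(E_1, E_2) \mapsto E_1 \ast E_2$. The trivial group is trivially subgroup separable, so it remains to know that subgroup separability is preserved under direct products with $\Z$ and under free products; both are standard facts about the profinite topology on finitely generated groups (the free-product case being classical). The induction then gives (a).

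For (b) recall that subgroup separability is inherited by subgroups, so it suffices to produce in each of $A(C_4)$ and $A(P_4)$ a finitely generated subgroup that is not closed in the profinite topology. For $A(C_4) = F_2 \times F_2$ this is classical: by \citep{mikhailova1966occurrence} it has a finitely generated subgroup with unsolvable membership problem, and since $A(C_4)$ is finitely presented, subgroup separability would force the generalized word problem to be solvable --- a contradiction. The case of $A(P_4)$ is different in kind: $A(P_4)$ has solvable membership problem \citep{kapovich2005foldings} and, $P_4$ having no induced square, does not even contain $F_2 \times F_2$ by Kambites's theorem \citep{kambites2009commuting}, so no reduction to undecidability is possible and one must exhibit the non-separable subgroup by hand. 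Following Metaftsis--Raptis, I would use the splitting $A(P_4) = \langle a, b\rangle \ast_{\langle b\rangle} \langle b, c, d\rangle \cong \Z^2 \ast_{\Z} (F_2 \times \Z)$ --- with $\langle b, d\rangle$ the free factor and $\langle c\rangle$ the central factor of the second vertex group --- choose a suitable finitely generated $K \leqslant A(P_4)$ and an element $g \notin K$, and then show that every homomorphism from $A(P_4)$ onto a finite group sends $g$ into the image of $K$; the verification runs through a careful analysis of how finite-index subgroups of this amalgam intersect the two vertex groups and the amalgamating edge group.

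The step I expect to be the main obstacle is exactly this last one. Unlike the $C_4$ case there is no soft argument: one needs both the right ``twisted'' finitely generated subgroup $K$ of $A(P_4)$ --- which cannot be a standard parabolic or a retract, since those are separable --- and a hands-on combinatorial proof that the finite quotients of $\Z^2 \ast_{\Z} (F_2 \times \Z)$ are too coarse to separate $g$ from $K$. Pinning down this subgroup and carrying out that finitary argument is the technical heart of the theorem.
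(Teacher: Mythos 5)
First, note that the paper does not prove this statement at all: it is quoted verbatim from \citep{metaftsis2008profinite} as part of the literature review, so your sketch can only be measured against the cited proof and on its own merits. Your overall architecture is the right one and matches how the result is actually proved: reduce to the graph formulation via \autoref{lem: injective graphs injective raags} together with \citep{kambites2009commuting} and \citep{kim2013embedability}; prove separability of elementary RAAGs by induction on the construction of \autoref{def: elementary RAAGs}; and show that the two poisonous groups are not subgroup separable. Your step (a) is sound: closure of subgroup separability under free products is the classical Burns--Romanovskii theorem, and closure under direct product with $\Z$ is true (it is not completely automatic, but it follows from the observation that if $G$ is subgroup separable then for every finitely generated $H \leqslant G$ and every finite-index $K \leqslant H$ there is a finite-index $M \leqslant G$ with $H \cap M = K$; this lets one separate the graph of any homomorphism $H \to \Z$ inside $G \times \Z$, and subgroups meeting the $\Z$-factor nontrivially reduce to the finite extension $G \times \Z/n$). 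Your treatment of $A(C_4)$ via \citep{mikhailova1966occurrence} and the fact, also recorded in the paper, that finitely presented subgroup separable groups have solvable generalized word problem, is correct.

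The genuine gap is exactly where you say it is, and acknowledging it does not close it: you never produce the non-separable finitely generated subgroup of $A(P_4)$. For this direction there is no soft argument, as you correctly observe ($A(P_4)$ has solvable membership problem and contains no $F_2 \times F_2$), so the entire content of the claim ``$A(P_4)$ is not subgroup separable'' resides in exhibiting a concrete $K \leqslant A(P_4) \cong \Z^2 \ast_{\Z} (F_2 \times \Z)$ and an element $g \notin K$ that no finite quotient separates, and then verifying this. Your proposal names neither $K$ nor $g$ and gives no verification, so the ``only if'' direction for graphs containing an induced $P_4$ is not proved. In the literature this step is due to Niblo and Wise, who showed that $A(P_4)$ is not subgroup separable by locating a Burns--Karrass--Solitar-type non-separable subgroup inside the amalgam you describe; Metaftsis and Raptis rely on precisely this ingredient. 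So: correct skeleton, correct easy halves, but the technical heart of the theorem is missing rather than merely deferred.
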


\begin{remark}
There are other properties of RAAGs that have this exactly same characterization. For example, if one wants to classify all RAAGs whose finitely generated subgroups are again RAAGs, then we obtain exactly the elementary ones, see \citep{droms1987subgroups}. So one can add many other properties in the equality of \autoref{thm: main theorem for RAAGs}
\end{remark}

Some authors call both graphs~$P_4$ and~$C_4$ \emph{poisonous}; similarly~$A(P_4)$ and~$A(C_4)$ get called \emph{poisonous groups}. They tend to be the smallese examples to not satisfy a certain property. 

\begin{remark}
Note that none of the groups~$A(P_4)$ and~$A(C_4)$ is a subgroup of the other, because none of~$P_4$ and~$C_4$ is an induced subgraph of the other.
\end{remark}

\section{Going from RAAGs to all Artin groups}\label{Going from RAAGs to all Artin groups}

When extending some of the previous results from RAAGs to another class of groups~$\mathcal{C}$ that contains RAAGs, one searches for the 'poisonous pieces'~$A(P_4)$ and~$A(C_4)$ in the larger class. Then any property that satisfies the subgroup closure and that fails for~$A(P_4)$ and~$A(C_4)$, fails for~$G \in  \mathcal{C}$ as well whenever~$G$ contains~$A(P_4)$ or~$A(C_4)$.

This will be the theme of this section for the class of all Artin groups~$\mathcal{A}$. One can hope to generalize the results from RAAGs to all Artin groups by tackling the following question.
\begin{question}\label{question: about containing P_4 and C_4}
Which Artin groups contain~$A(P_4)$ or~$A(C_4)$ as subgroups?
\end{question}
This is precisely the question that \cite{almeida2021subgroup} answer to classify subgroup separable Artin groups: an Artin group is subgroup separable if and only if it can be obtained from~$\Z$ and dihedral Artin groups (see \autoref{subsec: Dihedral Artin groups}), via a ﬁnite sequence of free products, and direct products with~$\Z$. This generalizes the Metaftsis-Raptis criterion for RAAGs.

\begin{theorem}[{\citealp[Theorem 1.1]{almeida2024subgroupseparabilityartingroups}}]\label{thm: separable subgroup Artin groups}
An Artin group~$A(\Gamma)$ is subgroup separable if and only if~$\Gamma$ does not contain the following induced subgraphs:
\begin{itemize}
\item[(1)] The usual poisonous graphs:
\begin{figure}[H]
\centering
\begin{tikzpicture}[scale = 0.8, >={Straight Barb[length=7pt,width=6pt]},thick]
\draw[] (-6.5, 0) node[left] {$P_4 =~$};
\draw[fill=black] (-6,0) circle (1pt) node[above] {$a$};
\draw[fill=black] (-5,0) circle (1pt) node[above] {$b$};
\draw[fill=black] (-4,0) circle (1pt) node[above] {$c$};
\draw[fill=black] (-3,0) circle (1pt) node[above] {$d$};
\draw[] (-3, 0) node[right] {$\,\,\, ,$};

\draw[thick] (-6,0) -- (-5,0);
\draw[thick] (-5,0) -- (-4,0);
\draw[thick] (-4,0) -- (-3,0);

\draw[] (1, 0) node[left] {$C_4 =~$};
\draw[fill=black] (2,1) circle (1pt) node[left] {$a$};
\draw[fill=black] (4,1) circle (1pt) node[right] {$b$};
\draw[fill=black] (4,-1) circle (1pt) node[right] {$c$};
\draw[fill=black] (2,-1) circle (1pt) node[left] {$d$};

\draw[thick] (2, 1) -- (4, 1);
\draw[thick] (4,1) -- (4,-1);
\draw[thick] (4,-1) -- (2,-1);
\draw[thick] (2,-1) -- (2,1);
\end{tikzpicture}
\end{figure} 

\item[(2)] Chordal Artin squares:
\begin{figure}[H]
\centering
\begin{tikzpicture}[scale = 0.8, >={Straight Barb[length=7pt,width=6pt]},thick]

\draw[fill=black] (2,1) circle (1pt) node[left] {$a$};
\draw[fill=black] (4,1) circle (1pt) node[right] {$b$};
\draw[fill=black] (4,-1) circle (1pt) node[right] {$c$};
\draw[fill=black] (2,-1) circle (1pt) node[left] {$d$};

\draw[] (1.5, 0) node[left] {$S_p =~$};

\draw[thick] (2, 1) -- (4, 1);
\draw[thick] (4,1) -- (4,-1);
\draw[thick] (4,-1) -- (2,-1);
\draw[thick] (2,-1) -- (2,1);
\draw[thick] (2,-1) --  (4,1);
\draw[] (4.5, 0) node[right] {\text{, and}};
\draw[fill=black] (3, -0.12) circle (0pt) node[above] {$p$};


\draw[fill=black] (9,1) circle (1pt) node[left] {$a$};
\draw[fill=black] (11,1) circle (1pt) node[right] {$b$};
\draw[fill=black] (11,-1) circle (1pt) node[right] {$c$};
\draw[fill=black] (9,-1) circle (1pt) node[left] {$d$};

\draw[] (8.5, 0) node[left] {$S_{p, q} =~$};

\draw[thick] (9, 1) -- (11, 1);
\draw[thick] (11,1) -- (11,-1);
\draw[thick] (11,-1) -- (9,-1);
\draw[thick] (9,-1) -- (9,1);
\draw[thick] (9,-1) --  (11,1);
\draw[thick] (11,-1) --  (9,1);
\draw[fill=black] (10.25, 0.1) circle (0pt) node[above] {$p$};
\draw[fill=black] (10.2, -0.15) circle (0pt) node[below] {$q$};
\end{tikzpicture}
\end{figure} 
with~$p, q > 2$.
\item[(3)] The following~$3$-vertex connected graphs:
\begin{itemize}
\item[(i)] The~$3$-vertex path~$P_3^{m, n}$ with~$m + n > 4$,
\begin{figure}[H]
\centering
\begin{tikzpicture}[>={Straight Barb[length=7pt,width=6pt]},thick]
\draw[fill=black] (0,0) circle (1.5pt) node[below] {$a$};
\draw[fill=black] (2,0) circle (1.5pt) node[below] {$b$};
\draw[fill=black] (4,0) circle (1.5pt) node[below] {$c$};
\draw[thick] (0,0) -- (2,0);
\draw[thick] (2,0) -- (4,0);
\draw[fill=black] (1,0)  node[above] {$m$};
\draw[fill=black] (3,0)  node[above] {$n$};
\end{tikzpicture}
\end{figure} 
\item[(ii)] and the triangles~$\Delta_{l, m, n}$ with at most one edge with label~$2$:
\begin{figure}[H]
\centering
\begin{tikzpicture}[>={Straight Barb[length=7pt,width=6pt]},thick]
\draw[] (-1, 0.8) node[left] {$\Delta_{l,m,n} =~$};
\draw[fill=black] (0,0) circle (1.5pt) node[below] {$a$};
\draw[fill=black] (2,0) circle (1.5pt) node[below] {$c$};
\draw[fill=black] (1,1.7) circle (1.5pt) node[above] {$b$};
\draw[thick] (0,0) -- node[below]{$n$}  (2,0);
\draw[thick] (2,0) -- node[right]{$m$}  (1,1.7);
\draw[thick] (0,0) -- node[left]{$l$} (1,1.7);
\end{tikzpicture}
\end{figure} 
\end{itemize}
\end{itemize}
\end{theorem}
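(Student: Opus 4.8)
The plan is to prove the stated equivalence in two directions, routing the direction ``$\Gamma$ avoids the list $\Rightarrow A(\Gamma)$ is subgroup separable'' through the earlier structural classification of \citep{almeida2021subgroup}: an Artin group is subgroup separable precisely when it can be built from copies of $\Z$ and from dihedral Artin groups by finitely many free products and direct products with $\Z$. Granting that classification, the theorem reduces to two claims: (a) if $\Gamma$ contains none of the six listed induced subgraphs, then $A(\Gamma)$ admits such a decomposition; and (b) if $\Gamma$ contains one of them, then $A(\Gamma)$ is not subgroup separable.

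For (a) I would induct on the number of vertices of $\Gamma$, using two moves that are immediate from the Artin presentation. If $\Gamma$ is disconnected, then $A(\Gamma)$ is the free product of the Artin groups on its connected components. If $\Gamma$ has a vertex $v$ joined to every other vertex by an edge of label $2$ (call it a \emph{universal $2$-vertex}), then $v$ is central, and the retractions of $A(\Gamma)$ onto $A(\Gamma-v)$ and onto $\langle v\rangle\cong\Z$ assemble into an isomorphism $A(\Gamma)\cong A(\Gamma-v)\times\Z$. In both cases the smaller graphs still avoid the list, so the induction closes, the base cases being a single vertex (giving $\Z$) and a single labelled edge (giving a dihedral Artin group). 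What remains is the purely graph-theoretic assertion that a connected $\Gamma$ on at least three vertices with no universal $2$-vertex must contain one of the six graphs. Here I would use that a $\{P_4,C_4\}$-free graph is a transitive forest, so a connected one has a universal vertex $v$; since $v$ is not a universal $2$-vertex, some edge $v$--$u$ has label $\geq 3$, and then a finite case analysis on $\Gamma-v$ (disconnected, or possessing a universal $2$-vertex, or otherwise settled by the inductive hypothesis, with the two-vertex situation treated by hand) exhibits a $P_3^{m,n}$ with $m+n>4$, a triangle $\Delta_{l,m,n}$ with at most one label-$2$ edge, or one of the squares $S_p$, $S_{p,q}$.

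For (b) I would invoke the theorem of \citep{van1983homotopy}: the standard parabolic subgroup of $A(\Gamma)$ on the offending induced subgraph is isomorphic to the Artin group on that subgraph; and since subgroup separability is inherited by subgroups (intersecting the finite-index subgroups of $G$ witnessing separability of $H$ with $H$ itself yields finite-index subgroups of $H$), it is enough to check that each of the six Artin groups fails to be subgroup separable. For $P_4$ and $C_4$ this is the Metaftsis--Raptis theorem \citep{metaftsis2008profinite}. For the chordal squares, the commutation pattern of the graph yields $A(S_p)\cong F_2\times D_p$ and $A(S_{p,q})\cong D_q\times D_p$, where $D_k$ denotes the dihedral Artin group on an edge of label $k$; these direct-product decompositions come from the fact that the parabolic subgroups $\langle a,c\rangle$ and $\langle b,d\rangle$ commute elementwise and carry obvious retractions. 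As a dihedral Artin group with label $k\geq 3$ contains a rank-$2$ free subgroup, each of $A(S_p)$ and $A(S_{p,q})$ contains $F_2\times F_2=A(C_4)$, which is not subgroup separable by \citep{metaftsis2008profinite} (indeed it is finitely presented yet contains a finitely generated subgroup of unsolvable membership problem, by \citep{mikhailova1966occurrence}). The main obstacle is the remaining two families: the paths $P_3^{m,n}$ with $m+n>4$, where $A(\Gamma)$ is an amalgam of two dihedral Artin groups over a common infinite cyclic subgroup, and the triangle Artin groups $\Delta_{l,m,n}$ with at most one label-$2$ edge, including the Euclidean cases $\Delta_{3,3,3}$, $\Delta_{2,4,4}$, $\Delta_{2,3,6}$. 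For each of these one must directly exhibit an embedded copy of $A(P_4)$ or $A(C_4)$ (equivalently, a finitely generated subgroup with unsolvable membership problem); I would carry this out following the embedding constructions of \citep{almeida2024subgroupseparabilityartingroups}, exploiting the affine Artin group structure in the Euclidean cases and the appropriate graph-of-groups splittings otherwise. This last step is the one I expect to absorb the bulk of the work.
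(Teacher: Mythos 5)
The paper never proves this statement: it is imported verbatim from \citep{almeida2024subgroupseparabilityartingroups}, and the closest in-paper material is (i) the explicit embeddings of poisonous RAAGs into the Artin groups of categories (2) and (3) (\autoref{ex: chordal Artin squares}, \autoref{lem: P_3^{m,n} contains P_4}, \autoref{cor: Delta_{l, m, n} contains P_4}), obtained from the Tits conjecture and the generalized Tits conjecture of \citep{jankiewicz2022right}, and (ii) the inductive free-product/central-vertex decomposition in the proof of \autoref{thm: when we have decidable RSMP}, which is the rational-subset analogue of your direction (a). Measured against that, your two-direction outline is the standard one and is sound in shape: heredity of subgroup separability under passing to subgroups, the reduction of the chordal squares via $A(S_p)\cong F_2\times D_p$ and $A(S_{p,q})\cong D_p\times D_q$ together with $F_2\hookrightarrow D_k$ for $k\ge 3$, the use of \citep{metaftsis2008profinite} and \citep{mikhailova1966occurrence} for $P_4$ and $C_4$, and for the converse the reduction (via \citep{almeida2021subgroup}) to building $A(\Gamma)$ from $\Z$ and dihedral Artin groups by free products and direct products with $\Z$.

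The genuine gap is exactly where the content of the theorem lies: for the two hard families in category (3), namely $P_3^{m,n}$ with $m+n>4$ and the triangles $\Delta_{l,m,n}$ with at most one label-$2$ edge, you give no argument at all, proposing instead to ``follow the embedding constructions of'' the very paper whose theorem you are proving; as a standalone proof this step is circular, and the mentions of amalgam and affine structure are not developed into an embedding. The concrete route the present paper takes is via \citep[Theorems 1.1 and 1.2]{jankiewicz2022right}: these groups are locally reducible or irreducible spherical not of type $E_6$, $E_7$, $E_8$, so the squares of the generators together with the squares of the centers of the dihedral parabolics generate a RAAG, and one reads off $\sgp{z_{ab}^2,\, b^2,\, z_{bc}^2,\, c^2}\cong A(P_4)$ inside $A(P_3^{m,n})$ and $\sgp{b^2,\, z_{bc}^2,\, c^2,\, z_{ac}^2}\cong A(P_4)$ inside $A(\Delta_{l,m,n})$; you would need to supply this (or an equivalent construction) yourself. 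A secondary, smaller gap: your claim that a connected graph on at least three vertices avoiding the list has a universal label-$2$ vertex is left as ``a finite case analysis''; it is true, but it is precisely the point where the paper's argument uses the link analysis of an edge labeled $p>2$ (the link is an unlabeled clique, since $S_p$, $S_{p,q}$ and $P_3^{2,k}$ are excluded) together with \citep[Lemmas 4.5 and 4.6]{almeida2021subgroup}, so it needs an actual proof rather than a sketch, especially once vertices not adjacent to the labeled edge are present.
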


\begin{remark}\label{rem: other poisenous graphs contain P_4, c_4}
We will see in \autoref{ex: chordal Artin squares} that~$A(C_4) \xhookrightarrow{} A(\Gamma)$ if~$\Gamma$ is any of the chordal Artin graphs in category~(2). On the other hand~$A(P_4) \xhookrightarrow{} A(\Gamma)$ if~$\Gamma$ is any of the~$3$-vertex Artin graphs in category~(3) by \autoref{lem: P_3^{m,n} contains P_4} and \autoref{cor: Delta_{l, m, n} contains P_4}.
\end{remark}
The main result appearing in this section is the following:
\begin{theorem}\label{thm: main theorem}
One has~$\mathcal{A}(Rat) = \mathcal{A}(Mon) = \mathcal{A}(SgSep) = \mathcal{A} \setminus \mathcal{A}(P_4 \lor C_4)$.
\end{theorem}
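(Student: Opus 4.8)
The plan is to prove the theorem by establishing a cycle of set inclusions. Concretely, I would show $\mathcal{A}(Rat)\subseteq\mathcal{A}(Mon)\subseteq\mathcal{A}\setminus\mathcal{A}(P_4\lor C_4)\subseteq\mathcal{A}(Rat)$, together with $\mathcal{A}(SgSep)\subseteq\mathcal{A}\setminus\mathcal{A}(P_4\lor C_4)\subseteq\mathcal{A}(SgSep)$; these two chains force all four classes to coincide.

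The easy inclusions come first. That $\mathcal{A}(Rat)\subseteq\mathcal{A}(Mon)$ is immediate, since every finitely generated submonoid is a rational subset. For $\mathcal{A}(Mon)\subseteq\mathcal{A}\setminus\mathcal{A}(P_4\lor C_4)$ I argue by contraposition: if $A(P_4)$ or $A(C_4)$ embeds in $A(\Gamma)$, then, decidability of submonoid membership passing to finitely generated subgroups, $A(\Gamma)$ inherits a submonoid with undecidable membership, because $A(P_4)$ carries one (Lohrey--Steinberg) and $A(C_4)=F_2\times F_2$ even carries a subgroup with undecidable membership (Mikhailova). The inclusion $\mathcal{A}(SgSep)\subseteq\mathcal{A}\setminus\mathcal{A}(P_4\lor C_4)$ is the same bookkeeping: subgroup separability passes to subgroups, while neither $A(P_4)$ nor $A(C_4)$ is subgroup separable (Metaftsis--Raptis).

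The substance is the inclusion $\mathcal{A}\setminus\mathcal{A}(P_4\lor C_4)\subseteq\mathcal{A}(Rat)$, which I would split into two steps, translating ``no $A(P_4)$ or $A(C_4)$ subgroup'' first into a property of the defining graph and then into membership in the Lohrey--Steinberg class $\mathcal{C}$ of \autoref{thm: class of groups with decidable rational subset membership}. \emph{Step 1 (graph obstruction).} If $\Gamma$ contains, as an induced subgraph, any graph $\Delta$ from the list of \autoref{thm: separable subgroup Artin groups} --- that is, $P_4$, $C_4$, a chordal Artin square $S_p$ or $S_{p,q}$, a path $P_3^{m,n}$ with $m+n>4$, or a triangle $\Delta_{l,m,n}$ with at most one edge labelled $2$ --- then the standard parabolic subgroup on $V\Delta$, which is isomorphic to $A(\Delta)$ by van der Lek's theorem, embeds in $A(\Gamma)$; and $A(\Delta)$ in turn contains $A(P_4)$ or $A(C_4)$, trivially when $\Delta\in\{P_4,C_4\}$ and by the embeddings recorded in \autoref{rem: other poisenous graphs contain P_4, c_4} otherwise. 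Hence $A(\Gamma)\in\mathcal{A}(P_4\lor C_4)$; contrapositively, if $A(\Gamma)\notin\mathcal{A}(P_4\lor C_4)$ then $\Gamma$ avoids every graph on that list. \emph{Step 2 (structure).} For such $\Gamma$, $A(\Gamma)$ is built from $\Z$ and dihedral Artin groups by finitely many free products and direct products with $\Z$; this is \autoref{thm: when we have decidable RSMP}, which I establish directly by an analysis of the graph. Because each dihedral Artin group has a finite-index subgroup isomorphic to $F_k\times\Z$, it lies in $\mathcal{C}$ (closure of $\mathcal{C}$ under free products, products with $\Z$, and finite-index overgroups), and the same closure properties then place $A(\Gamma)$ in $\mathcal{C}$, so $A(\Gamma)$ has decidable rational subset membership. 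The identical structural description also yields $\mathcal{A}\setminus\mathcal{A}(P_4\lor C_4)\subseteq\mathcal{A}(SgSep)$ --- either invoke \autoref{thm: separable subgroup Artin groups} directly for graphs avoiding the list, or use that $\Z$ and dihedral Artin groups are subgroup separable and that this property is preserved under free products and direct products with $\Z$. This closes both chains and hence proves the theorem.

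The one genuinely hard step is Step 2: showing that merely avoiding that finite list of induced subgraphs forces the free-product-and-$\times\Z$ decomposition. This requires a careful description of the Artin graphs with no induced $P_4$, $C_4$, chordal Artin square, heavily labelled $3$-path, or ``generic'' triangle, followed by an induction on the number of vertices in which one peels off free factors from disconnected pieces and $\Z$ direct factors from vertices joined to everything by label-$2$ edges, reducing to single vertices and single labelled edges; one also needs the input, recorded in the section on dihedral Artin groups, that dihedral Artin groups are commensurable with a direct product $F_k\times\Z$. Every other ingredient is either definitional or already available in the text above.
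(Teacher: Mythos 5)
Your proposal is correct and follows essentially the same route as the paper: the easy inclusions via the poisonous subgroups $A(P_4)$ and $A(C_4)$ and closure under passing to finitely generated subgroups, the Almeida--Lima graph list to translate ``no poisonous subgroup'' into an induced-subgraph condition (using the embeddings of \autoref{rem: other poisenous graphs contain P_4, c_4}), and then the inductive graph analysis (your Step 2) which is exactly the content and proof strategy of \autoref{thm: when we have decidable RSMP}, combined with the Lohrey--Steinberg closure properties and the fact that dihedral Artin groups are virtually $F_k\times\Z$. The only cosmetic difference is that you re-derive the inclusion $\mathcal{A}\setminus\mathcal{A}(P_4\lor C_4)\subseteq\mathcal{A}(SgSep)$ from the same structural decomposition, where the paper simply cites Almeida--Lima.
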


\begin{definition}
Let~$\mathcal{A}(\mathcal{E})$ be the smallest subclass of Artin groups such that:
\begin{itemize}
\item~$\mathcal{A}(\mathcal{E})$ contains all Artin groups of ranks at most 2;
\item If~$E \in \mathcal{A}(\mathcal{E})$, then~$\Z \times E \in \mathcal{A}(\mathcal{E})$.
\item If~$E_1, E_2 \in \mathcal{A}(E)$, then the free product~$E_1 \ast E_2 \in \mathcal{A}(\mathcal{E})$;
\end{itemize}
We refer to groups in~$\mathcal{A}(\mathcal{E})$ as \emph{elementary Artin groups}.
\end{definition}

Results \citep[Theorem A, and Corollary A]{almeida2021subgroup} yield:~$
\mathcal{A}(\mathcal{E}) = \mathcal{A}(SgSep)$.

\subsection{Dihedral Artin groups}\label{subsec: Dihedral Artin groups}
Let~$\delta_m$ be the Artin graph:
\begin{figure}[H]
\centering
\begin{tikzpicture}[>={Straight Barb[length=7pt,width=6pt]},thick]
\draw[fill=black] (0,0) circle (1.5pt) node[below] {$a$};
\draw[fill=black] (2,0) circle (1.5pt) node[below] {$b$};
\draw[thick] (0,0) -- (2,0);
\draw[fill=black] (1,0)  node[above] {$m$};
\end{tikzpicture}
\end{figure} 
The Artin group~$D_{m} \coloneqq A(\delta_m) = \gp{a, b}{(a, b)_m = (b, a)_m}$, is called the \emph{dihedral Artin group}. 

\begin{definition}
By a \emph{star-shaped graph}~$St_n$ we refer to the graph with~$n+1$ vertices, where one is central and connected to all the other~$n$ vertices, and these are the only edges of the graph. Graphically,~$St_n$ looks like this:
\begin{figure}[H]
\centering
\begin{tikzpicture}[rotate = -15, scale = 0.5,>={Straight Barb[length=7pt,width=4pt]}] 

\draw[fill=black] (0,0) circle (0pt) node[below] {$\beta$};

\draw[fill=black] (-4, 1) circle (0pt) node[left] {$St_n =~$};

\draw[fill=black] ({2*cos(150)}, {2*sin(150)}) circle (1.5pt) node[above left] {$\alpha_1$};

\draw[fill=black] ({2*cos(90)}, {2*sin(90)}) circle (1.5pt) node[above] {$\alpha_2$};

\draw[fill=black] ({2*cos(30)}, {2*sin(30)}) circle (1.5pt) node[above right] {$\alpha_3$};

\draw[fill=black] ({2*cos(330)}, {2*sin(330)}) circle (1.5pt) node[below right] {$\alpha_4$};

\draw[fill=black] ({2*cos(210)}, {2*sin(210)}) circle (1.5pt) node[below left] {$\alpha_{n}$};


\draw [dotted,domain=220:320] plot ({2*cos(\x)}, {2*sin(\x)});

\draw[thick] (0,0) -- ({2*cos(330)}, {2*sin(330)});
\draw[thick] (0,0) -- ({2*cos(30)}, {2*sin(30)});
\draw[thick] (0,0) -- ({2*cos(90)}, {2*sin(90)});
\draw[thick] (0,0) -- ({2*cos(150)}, {2*sin(150)});
\draw[thick] (0,0) -- ({2*cos(210)}, {2*sin(210)});
\end{tikzpicture}
\end{figure}
\end{definition}

\begin{theorem}
Dihedral Artin groups are finite extensions of RAAGs based on star-shaped graphs.
\end{theorem}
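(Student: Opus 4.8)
The plan is to prove something a little more precise: for every $m\ge 2$ the dihedral Artin group $D_m$ contains a finite-index normal subgroup isomorphic to $\Z\times F_n$ for some $n\ge 1$. Since the right-angled Artin group on the star $St_n$ is $A(St_n)=\gen{\beta}\times\gen{\alpha_1,\dots,\alpha_n}\cong\Z\times F_n$, this is exactly the assertion that $D_m$ is a finite extension of a RAAG based on a star-shaped graph. (One can moreover take $n=m-1$.)

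First I would dispose of $m=2$, where $D_2=\gp{a,b}{ab=ba}\cong\Z^2=A(St_1)$ is literally such a RAAG, a trivially finite extension of itself. From now on $m\ge 3$, so the Coxeter diagram $\delta_m$ is connected and $D_m$ is an irreducible Artin group of spherical type. Here I would invoke the classical structure theory of spherical Artin groups (Brieskorn--Saito, Deligne): the centre $Z:=Z(D_m)$ is infinite cyclic, generated by the Garside element $\Delta=(a,b)_m=(b,a)_m$ when $m$ is even and by $\Delta^2=(ab)^m$ when $m$ is odd (the generator being $\Delta$ or its square according to whether the longest element of $I_2(m)$ acts on $\{a,b\}$ trivially or by the transposition). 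The upshot I actually need is: $D_m$ has a central infinite cyclic subgroup $Z$ with \emph{virtually free} quotient $Q:=D_m/Z$. In fact a short Tietze computation — introduce $c=ab$ and eliminate $b=a^{-1}c$ — identifies $Q$ with $\Z/(m/2)\ast\Z$ when $m$ is even and with $\Z/m\ast\Z/2$ when $m$ is odd; either way $Q$ is a finitely generated free product of cyclic groups, hence virtually free, so it has a finite-index normal subgroup $F$ which is free, say $F\cong F_n$.

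Next I would pull this back. Let $\pi\colon D_m\to Q$ be the quotient map and set $H:=\pi^{-1}(F)$. Then $H\trianglelefteq D_m$ with $[D_m:H]=[Q:F]<\infty$, we have $Z\le H$, and $\pi$ restricts to a short exact sequence $1\to Z\to H\to F\to 1$ in which $Z$ is central in $H$ (being central in all of $D_m$). Since $F\cong F_n$ is free, the surjection $H\to F$ admits a section (lift a free basis), so the sequence splits; as $F$ centralises $Z$ the splitting yields $H\cong Z\times F\cong\Z\times F_n\cong A(St_n)$. Hence $D_m$ is a finite extension of the RAAG $A(St_n)$ on a star-shaped graph, as claimed.

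The step I expect to be the main obstacle is the middle one: correctly pinning down the centre of $D_m$ and the isomorphism type (or at least the virtual freeness) of $D_m/Z$. Leaning on the spherical-Artin-group literature makes this a one-line citation, but a self-contained treatment requires a careful — if elementary — computation with the Garside element, and one must remember to exclude $m=2$ precisely here, since there $D_2$ is \emph{reducible} of spherical type and its centre is all of $\Z^2$, strictly larger than $\gen{\Delta}=\gen{ab}$. Everything downstream — virtual freeness of a finite free product of cyclic groups, the splitting of a central extension of a free group, and the identification $\Z\times F_n\cong A(St_n)$ — is routine.
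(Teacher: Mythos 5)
Your argument is correct, and it reaches the theorem by a genuinely different route from the paper. The paper argues by parity and by citation: for $m=2n$ it rewrites $D_{2n}\cong\gp{a,x}{ax^n=x^na}$ and invokes an explicit result (Antol\'in--Foniqi) exhibiting the normal subgroup $\gen{x^n,\,a,\,xax^{-1},\dots,x^{n-1}ax^{-(n-1)}}\cong F_n\times\Z$ of index $n$; for $m$ odd it identifies $D_m$ with the torus knot group $\gp{x,y}{x^m=y^2}$ and cites Katayama's theorem that such groups are virtually $\Z\times F_{m-1}$ (index $2m$). You instead give a single uniform argument: quotient by the infinite cyclic centre (Brieskorn--Saito/Deligne), observe that $D_m/Z$ is a free product of cyclic groups, hence virtually free, pull back a finite-index normal free subgroup, and split the resulting central extension over a free group to get $\Z\times F_n\cong A(St_n)$. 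Your even-case pullback (kernel of $\Z\ast\Z/n\to\Z/n$) recovers exactly the paper's explicit subgroup, while your odd case replaces the torus-knot citation by the elementary splitting lemma; what you lose is the explicit indices and ranks the paper records (note its even case gives rank $m/2$ at index $m/2$, whereas your parenthetical rank $m-1$ corresponds to a deeper, but still available, normal free subgroup), and what you gain is a parity-free, essentially self-contained proof whose only external input is the structure of the centre of a dihedral Artin group. All the steps you flag as routine (virtual freeness of free products of cyclics, splitting of a central extension by a free group, $\Z\times F_n\cong A(St_n)$) are indeed sound as written.
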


For a sketch of the proof of the theorem above we distinguish two cases: (i) the label~$m$ is even, and (ii) the label~$m$ is odd; and we discuss these cases separately below.

\begin{itemize}
\item[(i)] {\bf Even dihedral Artin groups.}

For the even case denote~$m = 2n$. Then~$D_{2n} \cong \gp{a, x}{ax^n = x^n a}$ via the correspondence~$a \longleftrightarrow a$, and~$ab \longleftrightarrow x$. In \cite[Lemma 2.5.]{antolin2024subgroups} we show that the subgroup~$H_n = \gen{x^n, a, xax^{-1}, \ldots, x^{n-1}ax^{-n+ 1}}$ is normal and of finite index~$n$ in~$D_{2n}$. Moreover,~$H_n$ is isomorphic to~$\mathbb{F}_n\times \mathbb{Z}$ (with~$x^n$ being the central element).

The group~$H_n$ is a RAAG, concretely~$H_n = A(St_{n})$, with the correspondence~$\beta = x^n$, and~$\alpha_{i+1} = x^iax^{i-1}$ for~$0 \leq \alpha \leq n-1$.

\item[(ii)] {\bf Odd dihedral Artin groups.} 

In this case~$D_{m} \cong \gp{x, y}{x^m = y^2}$ via the correspondence~$ab \longleftrightarrow x$, and~$(b, a)_m \longleftrightarrow y$; which means that~$D_m$ is actually a torus knot. By~\cite[Theorem 1.5.]{katayama2017raags} torus knots (hence odd dihedral Artin groups as well) are finite extensions of a direct product of~$\Z$ and a free group; for our case~$D_m$ has a subgroup of index~$2m$ which is isomorphic to~$\Z \times F_{m-1}$, which is a RAAG given by~$A(St_{m-1})$. 
\end{itemize}

\begin{cor}\label{cor: dihedral Artin groups are in A(Rat)}
Since~$D_m$ is a finite extension of an elemetary RAAG, \autoref{thm: class of groups with decidable rational subset membership} implies that~$D_m$ has decidable rational subset membership problem, i.e. for all integers~$m \geq 2$, the group~$D_m$ belongs to~$\mathcal{A}(Rat)$.
\end{cor}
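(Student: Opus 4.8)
The plan is to unwind the one-line argument in the statement by combining the structural theorem proved just above --- that $D_m$ contains a finite-index subgroup isomorphic to the RAAG $A(St_n)$ on a star-shaped graph, where $n = m/2$ when $m$ is even and $n = m-1$ when $m$ is odd --- with the closure properties of the class $\mathcal{C}$ of \autoref{thm: class of groups with decidable rational subset membership}, in particular its closure under passing to finite-index overgroups (property (4)).

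First I would make the identification of $A(St_n)$ explicit: since $St_n$ has a central vertex $\beta$ joined to the pairwise non-adjacent leaves $\alpha_1,\dots,\alpha_n$, its defining presentation factors as $A(St_n) = \langle \alpha_1,\dots,\alpha_n\rangle \times \langle \beta\rangle \cong F_n \times \Z$. Moreover $St_n$ is a transitive forest: every induced subgraph of $St_n$ is again star-shaped, since all of its edges run through $\beta$, and a star is never $P_4$ or $C_4$; hence $A(St_n)$ is an elementary RAAG in the sense of \autoref{def: elementary RAAGs}. Next I would verify directly that $F_n \times \Z$ lies in $\mathcal{C}$: from $1 \in \mathcal{C}$ (property (1)) and closure under direct product with $\Z$ (property (2)) we get $\Z = 1 \times \Z \in \mathcal{C}$; closure under free products (property (5)) then gives $F_n \cong \Z \ast \cdots \ast \Z \in \mathcal{C}$ ($n$ copies); and one further application of property (2) gives $F_n \times \Z \in \mathcal{C}$. (This is exactly the observation recorded in the remark after \autoref{def: elementary RAAGs} that every elementary RAAG belongs to $\mathcal{C}$.) Applying property (4) to the finite-index inclusion $A(St_n) \leqslant D_m$ then puts $D_m$ in $\mathcal{C}$, and \autoref{thm: class of groups with decidable rational subset membership} gives decidability of the rational subset membership problem for $D_m$; that is, $D_m \in \mathcal{A}(Rat)$ for every $m \geq 2$.

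I do not expect a genuine obstacle here: this is a formal corollary, and the only steps requiring a line of justification are the isomorphism $A(St_n) \cong F_n \times \Z$ and the membership $F_n \times \Z \in \mathcal{C}$, after which the finite-index step is immediate from property (4). The boundary case $m = 2$ is degenerate but harmless: there $D_2 = \Z^2 = A(St_1)$ already lies in $\mathcal{C}$, with no proper finite-index subgroup needed.
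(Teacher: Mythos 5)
Your proposal is correct and follows the same route as the paper: the corollary is exactly the observation that the preceding structural discussion exhibits a finite-index elementary RAAG $A(St_n)\cong F_n\times\Z$ inside $D_m$ (with $n=m/2$ or $n=m-1$ according to parity), and closure property (4) of \autoref{thm: class of groups with decidable rational subset membership} then places $D_m$ in $\mathcal{C}$. Your explicit verification that $F_n\times\Z\in\mathcal{C}$ via properties (1), (2) and (5), and your note on the degenerate case $m=2$, simply spell out details the paper leaves implicit.
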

The discussion of this section and \autoref{lem: injective graphs injective raags} gives the following:
\begin{remark}\label{rem: dihedral Artin groups with m > 2 contain F_2}
The free group~$F_2$ (of rank~$2$) is a subgroup of the dihedral Artin group~$D_m$ if and only if~$m > 2$.
\end{remark}

\subsection{Finding RAAGs inside Artin groups}
One way to find poisonous RAAGs in certain Artin groups would be the observation coming from \autoref{rem: dihedral Artin groups with m > 2 contain F_2}; which is useful to find~$A(C_4) \simeq F_2 \times F_2$ inside Artin groups.
\begin{example}\label{ex: chordal Artin squares}
Let~$p, q > 2$ be integers, and consider the  chordal Artin squares~$S_p$ and~$S_{p, q}$ from category (2) of \autoref{thm: separable subgroup Artin groups}. Note that~$A(S_p) \simeq F_2 \times D_p$, and~$A(S_{p, q}) \simeq D_p \times D_q$. Since~$p, q > 2$, \autoref{rem: dihedral Artin groups with m > 2 contain F_2} implies~$F_2 \xhookrightarrow{} D_p$ and~$F_2 \xhookrightarrow{} D_q$; hence, one has these two inclusions: 
$A(C_4) \xhookrightarrow{} D_2 \times D_p = A(S_p)$, and~$A(C_4) \xhookrightarrow{} D_p \times D_q = A(S_{p,q})$.
\end{example}
Another way to see this, comes from Tits Conjecture studied by \cite{crisp2001solution}. We give only few details about it below.
\subsubsection*{Tits Conjecture}
In \citep{crisp2001solution} the authors proved the full Tits Conjecture for Artin groups. This conjecture gives presentations for the subroups of Artin groups generated by powers (at least~$2$) of vertices, and it shows that these presentations are RAAG presentations. Below we give just the case of squares of vertices (which was conjectured by Tits); see \cite[Theorem 1.]{crisp2001solution} for the full result.

\begin{theorem}[{\citealp[Corollary 2]{crisp2001solution}}]\label{thm: tits conjecure}
Let~$A(\Gamma)$ be an Artin group. The subgroup generated by the squares of vertices~$\{v^2 \mid v \in V\}$ is a RAAG with presentation:
\[
\gp{t_v \text{ for } v \in V}{ t_v t_u = t_u t_v \text{ whenever } \{u, v\} \in E},
\]
where the generator~$t_v$ corresponds to~$v^2$. 
\end{theorem}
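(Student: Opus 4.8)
\emph{Structure of the argument.} The statement asserts two things: (a) that $\langle v^2 : v \in V\rangle$ is a homomorphic image of the displayed right-angled Artin group $R$, and (b) that it is in fact isomorphic to $R$, that is, no relations beyond the displayed ones hold. Part (a) is routine and I would dispatch it first: let $\phi\colon R \to A(\Gamma)$ be the homomorphism with $t_v \mapsto v^2$. It is well defined because every commutation $t_u t_v = t_v t_u$ imposed by the presentation of $R$ corresponds to an edge $\{u,v\}$ of label $2$, for which $uv = vu$ in $A(\Gamma)$ and hence $u^2 v^2 = v^2 u^2$; and $\phi$ plainly maps onto $\langle v^2 : v\in V\rangle$. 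Everything then comes down to showing $\phi$ is injective (the Tits conjecture) and, for sharpness, that among the $v^2$ no commutation holds across an edge of label $\geq 3$; the latter is a special case of Step~1.

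\emph{Step 1: the dihedral base case.} I would treat $|V| \leq 2$ by hand. For $|V| = 1$ there is nothing to do, and for an edge of label $2$ both $A(\Gamma)$ and $\langle a^2, b^2\rangle$ are $\Z^2$. For an edge of label $m \geq 3$ the claim is that $\langle a^2, b^2\rangle$ is free of rank $2$ in $D_m$. When $m$ is odd one can read this off from the torus-knot presentation $D_m \cong \langle x, y \mid x^m = y^2\rangle$ recalled in \autoref{subsec: Dihedral Artin groups}: $D_m$ acts on the Bass--Serre tree of that amalgam decomposition, and a ping-pong argument with the translation axes of $a^2$ and $b^2$ yields freeness. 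When $m = 2n$ is even I would instead use the index-$n$ subgroup $H_n \cong F_n \times \Z$ of $D_{2n} \cong \langle a, x \mid a x^n = x^n a\rangle$ recalled there, locate $a^2$ and a suitable conjugate of $b^2$ inside $H_n$, and reduce to freeness inside a free factor. This proves the theorem for $|V| \leq 2$ and also gives the sharpness statement.

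\emph{Step 2: globalisation.} I would then induct on $|V|$. Let $w$ be a word in the $t_v^{\pm 1}$ that is reduced and $\neq 1$ in $R$; the goal is $\phi(w) \neq 1$ in $A(\Gamma)$. If the support $S$ of $w$ is a proper subset of $V$, then $w$ lies in the sub-RAAG on $S$, so $\phi(w)$ lies in the standard parabolic $A_S \cong A(\Delta)$ of van der Lek, and the inductive hypothesis for $\Delta$ finishes. So assume $w$ has full support. The natural plan is to produce a splitting of $A(\Gamma)$ as an amalgamated product or HNN extension over a proper standard parabolic, cyclically reduce $w$, show that $\phi(w)$ is in (cyclically) reduced position for the associated Bass--Serre tree, and conclude that it acts nontrivially on that tree and is therefore $\neq 1$; the vertex and edge groups are parabolics on smaller graphs, so Step~1 and the inductive hypothesis apply there.

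\emph{The main obstacle.} The real difficulty is Step~2, and the plan just sketched does not go through as stated: a general Artin group has no known solution to the word problem and no canonical normal form, and it need not split over a parabolic in a way that keeps the squared generators in reduced position after cyclic reduction. This is exactly why the genuine proof of the Tits conjecture (Crisp--Paris) does not run a bare amalgamation induction, but builds instead a uniform geometric model --- an action on a cell complex assembled from the defining graph, in the spirit of the Deligne/Salvetti complex --- in which each $v^2$ is dual to a family of ``walls'', so that two words distinct in $R$ are separated by the combinatorics of how their images cross those walls, uniformly in the type of $\Gamma$. I would expect any self-contained proof along the present lines to have to reconstruct a good portion of that machinery; the two-generator computation of Step~1 is the only genuinely elementary ingredient, and the natural starting point.
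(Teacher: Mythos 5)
There is a genuine gap, and you identify it yourself: Step~2 is not a proof but an acknowledgement that the plan fails. What you actually establish is only the routine half of the statement --- that $t_v \mapsto v^2$ gives a surjection from the displayed RAAG onto $\langle v^2 : v \in V\rangle$ --- plus a sketch of the rank-two case. Injectivity for general $\Gamma$, which is the entire content of the Tits conjecture, is not supplied. The proposed induction only handles words of proper support (via van der Lek's parabolic theorem); for a full-support word the splitting you invoke need not exist at all: an Artin group over a complete graph with all labels $\geq 3$ (e.g.\ a braid group) admits no visible decomposition over a proper standard parabolic, and even where such splittings exist one has no normal-form control to certify that the image of a cyclically reduced RAAG word acts hyperbolically on the Bass--Serre tree --- precisely because the word problem for general Artin groups is open, as you note. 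Your dihedral Step~1 also has a loose end in the even case: $b^2$ does not in general lie in the index-$n$ subgroup $H_n$, and replacing $b^2$ by a conjugate changes the subgroup whose freeness you need, so the reduction ``to freeness inside a free factor'' must be argued with the actual elements $a^2, b^2$ (this is fixable, e.g.\ by classical results on two-generator subgroups of $D_m$, but as written it is not complete).

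For calibration: the paper does not prove this statement either --- it is quoted verbatim from Crisp--Paris, and the surrounding text only uses it as a black box. So the honest conclusion about your proposal is that it reproves the easy surjectivity, gestures at the base case, and then correctly diagnoses that the inductive/Bass--Serre strategy cannot replace the Crisp--Paris machinery (their proof reduces injectivity to explicit two-generator computations via a global ping-pong argument tailored to RAAG relations, not via splittings over parabolics). As a proof of the theorem it is therefore incomplete; as an account of why the theorem is hard and where the difficulty sits, it is accurate.
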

Note that if two vertices~$a, b$ commute, their squares~$a^2, b^2$ commute as well, so it is reasonable to have~$a^2 b^2 = b^2 a^2$ as a relation. On the other hand, if~$a, b$ are connected by an edge with label~$p > 2$, the result shows that the corresponding squares~$a^2, b^2$ are free of relations.

Now, using \autoref{thm: tits conjecure} on \autoref{ex: chordal Artin squares} one concludes that the subgroup~$\sgp{a^2, \, b^2, \, c^2, \, d^2}$ of the Artin group~$A(S_p)$ (the same holds for~$A(S_{p, q})$ as well) is isomorphic to~$A(C_4)$. By abuse of notation, we use a graphical presentation for this subgroup as below:
\begin{figure}[H]
\centering
\begin{tikzpicture}[scale = 0.8, >={Straight Barb[length=7pt,width=6pt]},thick]

\draw[fill=black] (2,1) circle (1pt) node[left] {$a^2$};
\draw[fill=black] (4,1) circle (1pt) node[right] {$b^2$};
\draw[fill=black] (4,-1) circle (1pt) node[right] {$c^2$};
\draw[fill=black] (2,-1) circle (1pt) node[left] {$d^2$};

\draw[] (1, 0) node[left] {$\sgp{a^2, \, b^2, \, c^2, \, d^2} \simeq~$};

\draw[thick] (2, 1) -- (4, 1);
\draw[thick] (4,1) -- (4,-1);
\draw[thick] (4,-1) -- (2,-1);
\draw[thick] (2,-1) -- (2,1);
\end{tikzpicture}
\end{figure}
This shows that both~$A(S_p)$ and~$A(S_{p, q})$ contain~$A(C_4)$ as a poisonous subgroup. 

This approach is not always useful, as explained in the next example.
\begin{example}\label{ex: Path Artin groups containing A(P_4)}
Let~$P_3^{m, n}$ be the Artin graph from category (3) appearing in \autoref{thm: separable subgroup Artin groups}, and let~$A_{m,n} = \langle \, a,\, b,\, c \mid (a, b)_m = (b, a)_m,\, (b, c)_n = (c, b)_n \, \rangle$
the Artin group based on~$P_3^{m, n}$. 

Obviously~$A_{2, 2} = \Z \times F_2 \in \mathcal{A}(Rat)$. Now suppose that~$m \leq n$ and that~$n$ is greater than~$2$.

Using Tits conjecture we see that the subgroup~$\sgp{a^2, b^2, c^2}$ is either isomorphic to~$\Z^2 \ast \Z$ (if~$m = 2$), or to~$F_3$ (if~$m > 2$), and both of them belong to~$\mathcal{A}(Rat)$; however, we cannot conclude that this is the case as well for~$A_{m,n}$ because this subgroup is not of finite index. In fact, Tits conjecture is not useful in this example; we will soon see that~$A_{m,n}$ contains~$A(P_4)$ whenever~$m + n > 4$, in which case~$A_{m,n} \notin \mathcal{A}(Rat)$.
\end{example}

To show~$A(P_4) \xhookrightarrow{} A(P_3^{m, n})$ for~$m + n > 4$, we use something stronger than Tits conjecture.

\subsubsection*{Generalized Tits Conjecture}
In \citep*{jankiewicz2022right} the authors are interested in finding as 'large' RAAGs inside Artin groups as possible. They have stated the Generalized Tits Conjecture, which yields for each Artin group a subgroup that is a RAAG. They do it by taking high powers of centers of irreducible spherical Artin subgroups, and conjecture that they generate a RAAG. They manage to prove the conjecture for some subclasses of Artin groups, two of which include the following:
\begin{itemize}
\item locally reducible Artin groups;
\item irreducible spherical Artin groups that are not of type~$E_6$,~$E_7$,~$E_8$. 
\end{itemize}
This is sufficient to generalize the Lohrey-Steinberg criterion and establish a classification of all Artin groups with decidable submonoid and rational subset membership problems.

We will not delve much into details, but we will motivate the approach with examples of dihedral and triangle Artin groups.
\begin{example} Let~$m > 2$. The Crisp-Paris RAAG for the dihedral Artin group
\begin{figure}[H]
\centering
\begin{tikzpicture}[>={Straight Barb[length=7pt,width=6pt]},thick]
\draw[] (-.5, 0) node[left] {$D_{m} = \gp{a, b}{(a, b)_m = (b, a)_m}$, based on~$\delta_m =~$};
\draw[fill=black] (0,0) circle (1.5pt) node[below] {$a$};
\draw[fill=black] (2,0) circle (1.5pt) node[below] {$b$};
\draw[thick] (0,0) -- (2,0);
\draw[fill=black] (1,0)  node[above] {$m$};
\end{tikzpicture}
\end{figure} 
is a free group of rank~$2$ generated by~$a^2$ and~$b^2$, and we can express it graphically as:
\begin{figure}[H]
\centering
\hspace{-2cm}
\begin{tikzpicture}[>={Straight Barb[length=7pt,width=6pt]},thick]
\draw[] (-.5, 0) node[left] {C -- P RAAG~$=$ };
\draw[fill=black] (0,0) circle (1.5pt) node[below] {$a^2$};
\draw[fill=black] (2,0) circle (1.5pt) node[below] {$b^2$};
\end{tikzpicture}
\end{figure} 
There is a way to do better, because we already know from \autoref{subsec: Dihedral Artin groups} that~$\Z \times F_2$ embeds in~$D_m$. Dihedral Artin groups are of finite type in particular, so they have infinite cyclic centers. Denote~$z_{ab} = (a, b)_m$. If~$m$ is even,~$z_{ab}$ is the generator of the center of~$D_m$; instead, if~$m$ is odd, then the center of~$D_m$ is generated by~$z_{ab}^2$.
\end{example}
\begin{remark}\label{rem: center commutes with squares}
The element~$z_{ab}^2$ commutes with~$a^2$ and~$b^2$ in~$D_m$ for any~$m \geq 2$.
\end{remark}
Since dihedral Artin groups are \emph{locally reducible Artin groups}, \citep[Theorem 1.1]{jankiewicz2022right} implies that the subgroup of~$D_m$ generated by~$a^2$,~$z_{ab}^2$, and~$b^2$ is a RAAG isomorphic to~$\Z \times F_2$, which can be expressed graphically as:
\begin{figure}[H]
\centering
\hspace{-2cm}
\begin{tikzpicture}[>={Straight Barb[length=7pt,width=6pt]},thick]
\draw[] (-1, 0) node[left] {J -- S RAAG~$=$ };

\draw[thick, red] (0,0) -- (1.5,-0.5);
\draw[thick, red] (3,0) -- (1.5,-0.5);
\draw[fill=black] (0,0) circle (1.5pt) node[below left] {$a^2$};
\draw[fill=black] (3,0) circle (1.5pt) node[below right] {$b^2$};
\draw[fill=white] (1.5,-0.5) circle (1.5pt) node[below] {$z_{ab}^2$};
\end{tikzpicture}
\end{figure} 
This will help to find~$A(P_4)$ inside~$A(P_3^{m,n})$ -- which is \emph{locally reducible}. Below we consider The Jankiewicz -- Schreve RAAG for paths of the form
\begin{figure}[H]
\centering
\begin{tikzpicture}[>={Straight Barb[length=7pt,width=6pt]},thick]
\draw[fill=black] (-1,0)  node[left] {$P_3^{m,n} =~$};
\draw[fill=black] (0,0) circle (1.5pt) node[below] {$a$};
\draw[fill=black] (2,0) circle (1.5pt) node[below] {$b$};
\draw[fill=black] (4,0) circle (1.5pt) node[below] {$c$};
\draw[thick] (0,0) -- (2,0);
\draw[thick] (2,0) -- (4,0);
\draw[fill=black] (1,0)  node[above] {$m$};
\draw[fill=black] (3,0)  node[above] {$n$};
\end{tikzpicture}
\end{figure} 
\begin{lemma}[{\citealp[Theorem 1.1]{jankiewicz2022right}}]\label{lem: P_3^{m,n} contains P_4}
Suppose~$m \leq n$ and~$m + n > 4$. Denote~$z_{ab} = (a, b)_m$ and~$z_{bc} = (b, c)_n$.
The subgroup~$\sgp{z_{ab}^2, b^2, z_{bc}^2, c^2}$ of~$A(P_3^{m,n})$ is a RAAG, isomorphic to~$A(P_4)$, and can be expressed graphically as:
\begin{figure}[H]
\centering
\hspace{-2cm}
\begin{tikzpicture}[>={Straight Barb[length=7pt,width=6pt]},thick]
\draw[] (-1, 0) node[left] {J -- S RAAG~$=$ };

\draw[thick, brown] (3,0) -- (1.5,-0.5);
\draw[thick, brown] (3,0) -- (4.5,-0.5);
\draw[thick, brown] (6,0) -- (4.5,-0.5);
\draw[fill=black] (3,0) circle (1.5pt) node[below] {$b^2$};
\draw[fill=black] (6,0) circle (1.5pt) node[below right] {$c^2$};
\draw[fill=white] (1.5,-0.5) circle (1.5pt) node[below] {$z_{ab}^2$};
\draw[fill=white] (4.5,-0.5) circle (1.5pt) node[below] {$z_{bc}^2$};
\end{tikzpicture}
\end{figure} 
Note that if~$m > 2$ there is a larger RAAG inside, namely the one below:
\begin{figure}[H]
\centering
\hspace{-2cm}
\begin{tikzpicture}[>={Straight Barb[length=7pt,width=6pt]},thick]
\draw[] (-1, 0) node[left] {J -- S RAAG~$=$ };

\draw[thick, brown] (0,0) -- (1.5,-0.5);
\draw[thick, brown] (3,0) -- (1.5,-0.5);
\draw[thick, brown] (3,0) -- (1.5,-0.5);
\draw[thick, brown] (3,0) -- (4.5,-0.5);
\draw[thick, brown] (6,0) -- (4.5,-0.5);
\draw[fill=black] (0,0) circle (1.5pt) node[below left] {$a^2$};
\draw[fill=black] (3,0) circle (1.5pt) node[below] {$b^2$};
\draw[fill=black] (6,0) circle (1.5pt) node[below right] {$c^2$};
\draw[fill=white] (1.5,-0.5) circle (1.5pt) node[below] {$z_{ab}^2$};
\draw[fill=white] (4.5,-0.5) circle (1.5pt) node[below] {$z_{bc}^2$};
\end{tikzpicture}
\end{figure} 
\end{lemma}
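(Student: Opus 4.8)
The plan is to deduce the lemma directly from the Generalized Tits theorem \citep[Theorem 1.1]{jankiewicz2022right}, by spelling out what it says for the graph~$P_3^{m,n}$. First I would note that $A(P_3^{m,n})$ is covered by that theorem: its defining graph is a path, so it is triangle-free and has no irreducible spherical standard parabolic of rank~$\geq 3$, whence $A(P_3^{m,n})$ is locally reducible. Thus \citep[Theorem 1.1]{jankiewicz2022right} produces inside $A(P_3^{m,n})$ a RAAG generated by suitable powers of the central elements of the irreducible spherical standard parabolic subgroups, together with an explicit description of its defining graph, and the proof reduces to reading off the relevant induced subgraph.

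Next I would list the irreducible spherical standard parabolics. The vertices give copies of~$\Z$, with associated generators $a^2$, $b^2$, $c^2$ (squares of vertices, as in the Crisp--Paris convention of \autoref{thm: tits conjecure}). Since $m+n>4$ forces $n\geq 3$, the edge $\{b,c\}$ gives the dihedral Artin group $D_n=A_{\{b,c\}}$, contributing the generator $z_{bc}^2$, a power of the central generator of $D_n$ (cf. \autoref{subsec: Dihedral Artin groups}). The edge $\{a,b\}$ gives $D_m=A_{\{a,b\}}$ and the generator $z_{ab}^2$ provided $m\geq 3$ (if $m=2$ then $A_{\{a,b\}}\cong\Z^2$ is reducible, a degenerate case I address at the end). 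The subsets $\{a,c\}$ and $\{a,b,c\}$ are not of spherical type, so they contribute nothing. By \autoref{rem: center commutes with squares}, $z_{ab}^2$ commutes with $a^2$ and $b^2$, and $z_{bc}^2$ commutes with $b^2$ and $c^2$.

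Then I would read off the defining graph of the Jankiewicz--Schreve RAAG restricted to the four elements $z_{ab}^2$, $b^2$, $z_{bc}^2$, $c^2$. The edges $z_{ab}^2\!-\!b^2$, $b^2\!-\!z_{bc}^2$, $z_{bc}^2\!-\!c^2$ are present, by the commutations just recorded. The remaining three pairs are non-adjacent: $b^2$ and $c^2$ generate the Crisp--Paris free group of rank~$2$ inside $D_n$ (here $n\geq 3$, so $F_2\hookrightarrow D_n$ by \autoref{rem: dihedral Artin groups with m > 2 contain F_2}), and likewise $\{z_{ab}^2,c^2\}$ and $\{z_{ab}^2,z_{bc}^2\}$ generate non-abelian subgroups --- these non-commutations being precisely what \citep[Theorem 1.1]{jankiewicz2022right} records for this graph. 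Hence the induced subgraph is the path~$P_4$, and, since a subset of the standard generators of a RAAG generates the sub-RAAG on the corresponding induced subgraph (the retraction argument of \autoref{lem: injective graphs injective raags}, applied inside the Jankiewicz--Schreve RAAG), we obtain $\sgp{z_{ab}^2, b^2, z_{bc}^2, c^2}\cong A(P_4)$ with the stated picture. For the last assertion, if $m>2$ the edge $\{a,b\}$ genuinely contributes~$z_{ab}^2$, and adjoining~$a^2$ --- which commutes with $z_{ab}^2$ but with none of $b^2$, $z_{bc}^2$, $c^2$ --- extends the path to $a^2\!-\!z_{ab}^2\!-\!b^2\!-\!z_{bc}^2\!-\!c^2$.

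I expect the step needing the most care to be the bookkeeping inside the Jankiewicz--Schreve construction: matching $z_{ab}^2$ and $z_{bc}^2$ to the correct powers of the dihedral centres and using the adjacency rule of the theorem correctly, so that the non-commutations of $z_{ab}^2$ with $z_{bc}^2$ and with $c^2$ are genuinely justified --- this is where one invokes the content of \citep[Theorem 1.1]{jankiewicz2022right} rather than only \autoref{thm: tits conjecure}. A secondary, routine point is the degenerate case $m=2$: there $\{a,b\}$ spans~$\Z^2$, so $z_{ab}^2=(ab)^2=a^2b^2$ is not itself a parabolic generator, and one checks by hand --- for instance via the Servatius transvection $a^2\mapsto a^2b^2$ of $A(P_4)$ --- that $\sgp{z_{ab}^2, b^2, z_{bc}^2, c^2}=\sgp{a^2, b^2, z_{bc}^2, c^2}$ is still the $P_4$-RAAG with the pictured graph, and that in this case there is no larger RAAG of the same shape.
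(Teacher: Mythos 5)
Your argument is correct and takes essentially the same route as the paper: the lemma is obtained by specializing \citep[Theorem 1.1]{jankiewicz2022right} to the locally reducible Artin group $A(P_3^{m,n})$, listing its irreducible spherical standard parabolics and reading off the induced subgraph of the Jankiewicz--Schreve RAAG on $z_{ab}^2$, $b^2$, $z_{bc}^2$, $c^2$ (and adjoining $a^2$ when $m>2$). Your separate treatment of the degenerate case $m=2$ --- where $\{a,b\}$ is reducible, so $z_{ab}^2=a^2b^2$ is not itself a Jankiewicz--Schreve generator, and one passes to the generating set $a^2, b^2, z_{bc}^2, c^2$ via a transvection --- is a worthwhile detail that the paper leaves implicit.
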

\begin{cor}
The Artin group~$A(P_3^{m,n})$ has decidable rational subset membership problem if and only if~$m = n = 2$, in which case~$A(P_3^{2, 2}) \simeq \Z \times F_2$. Otherwise it contains~$A(P_4)$.
\end{cor}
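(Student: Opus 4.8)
The plan is to handle the two directions separately, reducing each to a result already on the table. First I would dispose of the case $m = n = 2$. Here the labels are all $2$, so $P_3^{2,2}$ is the unlabelled path $P_3$, and $A(P_3^{2,2})$ is the RAAG in which the central vertex $b$ commutes with both endpoints $a, c$ while $a, c$ satisfy no relation; hence $A(P_3^{2,2}) \cong F_2 \times \Z$. Writing $F_2 = \Z \ast \Z$, this group is obtained from the trivial group by the operations permitted in \autoref{thm: class of groups with decidable rational subset membership} (free products, and direct products with $\Z$), so it lies in the class $\mathcal{C}$ and therefore has decidable rational subset membership problem. This also records the claimed isomorphism $A(P_3^{2,2}) \simeq \Z \times F_2$.

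For the converse, I would first note that since $m, n \geq 2$ we have $m + n \geq 4$, with equality precisely when $m = n = 2$; thus "not $m = n = 2$" is exactly the hypothesis $m + n > 4$ of \autoref{lem: P_3^{m,n} contains P_4}. The graph $P_3^{m,n}$ is symmetric under the transposition $a \leftrightarrow c$, which induces an isomorphism $A(P_3^{m,n}) \cong A(P_3^{n,m})$; so after possibly swapping the two endpoints we may assume $m \leq n$, which is the normalisation under which \autoref{lem: P_3^{m,n} contains P_4} is stated. Applying it gives an embedding $A(P_4) \hookrightarrow A(P_3^{m,n})$, realised by the subgroup $\sgp{z_{ab}^2, b^2, z_{bc}^2, c^2}$; this proves the "otherwise it contains $A(P_4)$" assertion.

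Finally I would convert this embedding into an undecidability statement. By the Lohrey–Steinberg result quoted above, $A(P_4)$ contains a submonoid with undecidable membership problem, so $A(P_4) \notin \mathcal{A}(Mon)$, and a fortiori $A(P_4) \notin \mathcal{A}(Rat)$. Since decidability of the rational subset (respectively submonoid) membership problem is inherited by finitely generated subgroups, and $A(P_4)$ is a finitely generated subgroup of $A(P_3^{m,n})$, it follows that $A(P_3^{m,n}) \notin \mathcal{A}(Rat)$, completing the equivalence. I expect essentially no obstacle here: the only point needing a word of care is the symmetry reduction to $m \leq n$, since \autoref{lem: P_3^{m,n} contains P_4} is phrased only in that normalisation; everything else is a direct invocation of earlier statements.
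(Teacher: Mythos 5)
Your proposal is correct and follows essentially the same route as the paper: the corollary is deduced directly from \autoref{lem: P_3^{m,n} contains P_4} (after the same symmetry normalisation $m \leq n$, and noting that ``not $m=n=2$'' is exactly $m+n>4$), combined with the undecidability of submonoid membership in $A(P_4)$ and inheritance of decidability by finitely generated subgroups, while the case $m=n=2$ is handled by observing $A(P_3^{2,2}) \simeq \Z \times F_2 \in \mathcal{A}(Rat)$ via \autoref{thm: class of groups with decidable rational subset membership}. No gaps.
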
 

Below we consider The Jankiewicz -- Schreve RAAG for triangles of the form~$\Delta_{l,m,n}$ from category (3) of \autoref{thm: separable subgroup Artin groups}. Every triangle with at most one unlabeled edge is either locally reducible, or irreducible spherical (and of course, not of type ~$E_6$,~$E_7$,~$E_8$), so the RAAG provided by the generalized Tits conjecture injects in~$A(\Delta_{l,m,n})$.

\begin{remark}\label{Triangle Artin groups containing A(P_4)}
Consider the triangle Artin graph:
\begin{figure}[H]
\centering
\begin{tikzpicture}[>={Straight Barb[length=7pt,width=6pt]},thick]
\draw[] (-1, 0.8) node[left] {$\Delta_{l,m,n} =~$};
\draw[fill=black] (0,0) circle (1.5pt) node[below] {$a$};
\draw[fill=black] (2,0) circle (1.5pt) node[below] {$c$};
\draw[fill=black] (1,1.7) circle (1.5pt) node[above] {$b$};
\draw[thick] (0,0) -- node[below]{$n$}  (2,0);
\draw[thick] (2,0) -- node[right]{$m$}  (1,1.7);
\draw[thick] (0,0) -- node[left]{$l$} (1,1.7);
\end{tikzpicture}
\end{figure} 
and~$A_{l, m, n} = \langle \, a,\, b,\, c \mid (a, b)_l = (b, a)_l,\, (b, c)_m = (c, b)_m \, (c, a)_n = (a, c)_n \,\rangle$,
the Artin group based on~$\Delta_{l, m, n}$. 
Assume~$2  \leq l \leq m \leq n$.
If~$l = m = 2$, then~$A_{2,2,n} = \Z \times D_n$, which belongs to~$\mathcal{A}(Rat)$. Now suppose that both~$m, n$ are greater than~$2$. 
In this case~$A(P_4)$ embeds in~$A_{l, m, n}$ by~ \cite[Theorem 1.1 and Theorem 1.2]{jankiewicz2022right}. Indeed, the subgroup~$\sgp{b^2, z_{bc}^2, c^2, z_{ac}^2}$, is a RAAG isomorphic to~$A(P_4)$, because it is a subgroup of the Jankiewicz -- Schreve RAAG generated by its standard generators.
\end{remark}
\begin{cor}\label{cor: Delta_{l, m, n} contains P_4}
The Artin group~$A(\Delta_{l, m, n})$ has decidable submonoid membership problem if and only if at least two of the labels~$m, n, p$ are equal to~$2$. Otherwise it contains~$A(P_4)$.
\end{cor}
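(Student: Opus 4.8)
The plan is to obtain both implications from results already established, after a harmless relabelling. First I would reorder the three vertices of the triangle so that $2 \le l \le m \le n$; with this convention the hypothesis ``at least two of the labels $l,m,n$ equal $2$'' becomes exactly $l = m = 2$, and its negation forces $m, n > 2$. So it suffices to treat these two regimes separately, mirroring \autoref{Triangle Artin groups containing A(P_4)}.

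In the regime $l = m = 2$ the vertex $b$ commutes with both $a$ and $c$, so $A(\Delta_{l,m,n}) \cong \Z \times D_n$, as recorded in \autoref{Triangle Artin groups containing A(P_4)}. I would then note that $D_n$ is a finite extension of an elementary RAAG (see \autoref{subsec: Dihedral Artin groups}), hence lies in the class $\mathcal{C}$ of \autoref{thm: class of groups with decidable rational subset membership}, since $\mathcal{C}$ contains all elementary RAAGs and is closed under passing to finite-index overgroups; then $\Z \times D_n \in \mathcal{C}$ by closure of $\mathcal{C}$ under direct product with $\Z$. Thus $A(\Delta_{l,m,n})$ has decidable rational subset membership problem, and in particular decidable submonoid membership problem. (Equivalently, one may quote \autoref{cor: dihedral Artin groups are in A(Rat)} for $D_n$ itself.)

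In the regime $m, n > 2$ I would invoke \autoref{Triangle Artin groups containing A(P_4)} directly: there, via the Generalized Tits Conjecture results of \cite{jankiewicz2022right}, the subgroup $\sgp{b^2, z_{bc}^2, c^2, z_{ac}^2}$ of $A(\Delta_{l,m,n})$ is identified with a RAAG isomorphic to $A(P_4)$. Since $A(P_4)$ contains a submonoid with undecidable membership by \citep[Theorem 5]{lohrey2008submonoid}, and since decidability of the submonoid membership problem passes from a finitely generated group to its finitely generated subgroups, $A(\Delta_{l,m,n})$ must have undecidable submonoid membership problem. Combined with the previous regime, this yields the stated equivalence.

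I do not expect a genuine obstacle in this argument: all of the substance sits in the two imported facts --- that dihedral Artin groups are finite extensions of elementary RAAGs, and that every triangle Artin group with at most one unlabelled edge is either locally reducible or irreducible spherical of a type for which \cite{jankiewicz2022right} prove the Generalized Tits Conjecture, so that the Jankiewicz--Schreve RAAG $A(P_4)$ genuinely injects. The only point needing even minimal care is the bookkeeping after sorting the labels, i.e.\ checking that both ``exactly one label equals $2$'' and ``no label equals $2$'' fall into the regime $m, n > 2$, which is immediate.
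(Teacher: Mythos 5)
Your proposal is correct and follows essentially the same route as the paper: the case $l=m=2$ gives $\Z\times D_n\in\mathcal{A}(Rat)$ via \autoref{cor: dihedral Artin groups are in A(Rat)} and \autoref{thm: class of groups with decidable rational subset membership}, while the case $m,n>2$ uses the Jankiewicz--Schreve embedding of $A(P_4)$ from \autoref{Triangle Artin groups containing A(P_4)} together with the Lohrey--Steinberg undecidability for $A(P_4)$ and closure under finitely generated subgroups. No substantive differences from the paper's argument.
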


\begin{example} Below we provide an example of the triangle~$\Delta_{2, 3, 4}$ which is irreducible of spherical type. The graph, and the Crisp -- Paris RAAG graph are given below:
\begin{figure}[H]
\centering
\begin{tikzpicture}[>={Straight Barb[length=7pt,width=6pt]},thick]
\draw[] (-.5, 0.8) node[left] {$\Delta_{2,3,4} =~$};
\draw[fill=black] (0,0) circle (1.5pt) node[below] {$a$};
\draw[fill=black] (2,0) circle (1.5pt) node[below] {$c$};
\draw[fill=black] (1,1.7) circle (1.5pt) node[above] {$b$};
\draw[thick] (0,0) -- node[below]{$4$}  (2,0);
\draw[thick] (2,0) -- node[right]{$3$}  (1,1.7);
\draw[thick] (0,0) -- node[left]{} (1,1.7);

\draw[] (7, 0.8) node[left] {,\quad \quad \quad C -- P RAAG = };
\draw[fill=black] (8,0) circle (1.5pt) node[below] {$a^2$};
\draw[fill=black] (10,0) circle (1.5pt) node[below] {$c^2$};
\draw[fill=black] (9,1.7) circle (1.5pt) node[above] {$b^2$};

\draw[thick] (8,0) -- (9,1.7);
\end{tikzpicture}
\end{figure} 

The subgroups~$\sgp{b, c}$ and~$\sgp{a, c}$ of~$A(\Delta_{2,3,4})$ are dihedral and of spherical type in particular, so they have infinite cyclic centers. In spirit of \autoref{rem: center commutes with squares} we denote by~$z_{bc}^2$ and~$z_{a c}^2$ respectively the generators that will appear in the Jankiewicz -- Schreve RAAG. \\
Moreover,~$A(\Delta_{2,3,4})$ is of spherical type as well, so it has a non-trivial center: denote by~$z_{abc}^2$ the generator that will appear in the Jankiewicz -- Schreve RAAG, which we draw below.
\begin{figure}[H]
\centering
\begin{tikzpicture}[scale = 1.5, >={Straight Barb[length=7pt,width=6pt]},thick]

\draw[] (-1, 0.5) node[left] {J -- S RAAG~$=$ };


\draw[thick] (0,0) -- (1,1.7);

\draw[thick, brown] (0,0) -- (1,-0.5);
\draw[thick, brown] (1,-0.5) -- (2,0);

\draw[thick, brown] (1, 1.7) -- (2,1);
\draw[thick, brown] (2,1) -- (2,0);

\draw[thin, brown] (0,0) -- (0.75,0.7);
\draw[thin, brown] (1, 1.7)  -- (0.75,0.7);
\draw[thin, brown] (2,0) -- (0.75,0.7);
\draw[thin, brown] (1,-0.5) -- (0.75,0.7);
\draw[thin, brown] (2, 1) -- (0.75,0.7);

\draw[thick, brown] (1, 1.7) -- (2,1);
\draw[thick, brown] (2,1) -- (2,0);

\draw[fill=black] (0,0) circle (1.5pt) node[below left] {$a^2$};
\draw[fill=black] (2,0) circle (1.5pt) node[below] {$c^2$};
\draw[fill=black] (1,1.7) circle (1.5pt) node[above right] {$b^2$};

\draw[fill=white] (1,-0.5) circle (1.5pt) node[below] {$z_{ac}^2$};

\draw[fill=white] (2,1) circle (1.5pt) node[right] {$z_{bc}^2$};

\draw[fill=white] (0.75,0.7) circle (1.5pt);

\draw[fill=white] (0.715,0.735) circle (0pt)
node[above right] {$z_{a bc}^2$};
\end{tikzpicture}
\end{figure} 
\end{example}

The following is the containment~$\mathcal{A} \setminus \mathcal{A}(P_4 \lor C_4) \subseteq \mathcal{A}(Rat)$, for which we provide a proof with a slightly new flavour. The idea is to identify labeled Artin graphs~$\Gamma$ that do not contain as induced subgraphs any of the graphs listed in \autoref{thm: separable subgroup Artin groups}. Then one uses the characterization of such graphs and closure properties of rational subset membership problem to deduce the result.
These closure properties are the same (in the context of this proof) as those of subgroup separability in the proof of \cite[Theorem 1.1]{almeida2024subgroupseparabilityartingroups}.
\begin{theorem}\label{thm: when we have decidable RSMP}
The Artin group~$A(\Gamma)$
has decidable rational subset membership problem if~$\Gamma$ does not contain as induced subgraphs any of the graphs listed in \autoref{thm: separable subgroup Artin groups}.
\end{theorem}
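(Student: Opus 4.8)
The plan is to derive the theorem from the equivalent statement that, under the stated hypothesis, $A(\Gamma)$ is an \emph{elementary Artin group}, i.e.\ $A(\Gamma)\in\mathcal{A}(\mathcal{E})$. Granting this, the theorem follows at once: one checks that $\mathcal{A}(\mathcal{E})$ is contained in the Lohrey--Steinberg class $\mathcal{C}$ of \autoref{thm: class of groups with decidable rational subset membership}, all of whose members have decidable rational subset membership problem. The only genuinely new ingredient compared with the Almeida--Lima treatment of subgroup separability is that one invokes the closure properties of $\mathcal{C}$ in place of those of the class of subgroup separable groups; the graph-theoretic backbone is the same as in \cite{almeida2024subgroupseparabilityartingroups} and can either be re-derived or quoted from there.

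The inclusion $\mathcal{A}(\mathcal{E})\subseteq\mathcal{C}$ I would prove by induction on the construction of $\mathcal{A}(\mathcal{E})$. The base cases are the Artin groups of rank at most $2$: the trivial group, $\Z$, $F_2=\Z\ast\Z$, $\Z^2=\Z\times\Z$, and the dihedral Artin groups $D_m$ with $m\geq3$. The first four lie in $\mathcal{C}$ directly by axioms (1), (2) and (5) of \autoref{thm: class of groups with decidable rational subset membership}, while $D_m\in\mathcal{C}$ because, by \autoref{cor: dihedral Artin groups are in A(Rat)}, it is a finite extension of an elementary RAAG, which belongs to $\mathcal{RE}\subseteq\mathcal{C}$, so axiom (4) applies. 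The two operations used to build all of $\mathcal{A}(\mathcal{E})$ out of these base cases, direct product with $\Z$ and free product, are precisely axioms (2) and (5) of $\mathcal{C}$; hence $\mathcal{A}(\mathcal{E})\subseteq\mathcal{C}$.

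For the equivalent statement I would induct on $|V\Gamma|$. First, the underlying unlabelled graph of $\Gamma$ has no induced $P_4$ and no induced $C_4$: such an induced subgraph all of whose edges carry the label $2$ is literally one of the forbidden graphs, whereas if one of its edges carried a label greater than $2$ then one of its three-vertex induced sub-paths would be a forbidden $P_3^{m,n}$ with $m+n>4$. Hence the underlying graph of $\Gamma$ is trivially perfect. If $\Gamma$ is disconnected, then $A(\Gamma)=A(\Gamma_1)\ast A(\Gamma_2)$ is a free product of Artin groups on smaller subgraphs that again avoid the forbidden list, and we finish by induction and closure of $\mathcal{A}(\mathcal{E})$ under free products. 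If $|V\Gamma|\le2$ we are in a base case. Otherwise $\Gamma$ is connected with $|V\Gamma|\geq3$ and so, being trivially perfect, has a universal vertex $v$; if all edges at $v$ are labelled $2$ then $v$ generates a central $\Z$, whence $A(\Gamma)=\Z\times A(\Gamma-v)$ and we finish by induction and closure of $\mathcal{A}(\mathcal{E})$ under $\times\Z$.

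The main obstacle is the remaining case: $\Gamma$ connected, $|V\Gamma|\geq3$, and the universal vertex $v$ has some incident edge $vw$ of label $m>2$. Here I would show that $\Gamma$ is then complete with $vw$ its only edge of label exceeding $2$, so that $A(\Gamma)=D_m\times\Z^{|V\Gamma|-2}\in\mathcal{A}(\mathcal{E})$. This is exactly the labelled case analysis at the heart of \cite{almeida2024subgroupseparabilityartingroups}: for any vertex $u\notin\{v,w\}$, (a) $u$ is adjacent to $w$, for otherwise $\{u,v,w\}$ induces a forbidden path $P_3^{m',m}$ with $m'+m>4$, so $w$ is universal as well; (b) the triangle $\{u,v,w\}$ already carries one edge, $vw$, of label greater than $2$, so both $uv$ and $uw$ must have label $2$, for otherwise $\{u,v,w\}$ is a forbidden triangle $\Delta_{l,m,n}$ with at most one label-$2$ edge; (c) any two vertices $u_1,u_2\notin\{v,w\}$ are adjacent, for otherwise $\{v,w,u_1,u_2\}$ induces a forbidden chordal square $S_m$; and (d) the edge $u_1u_2$ has label $2$, for otherwise $\{v,w,u_1,u_2\}$ induces a forbidden chordal square $S_{m,k}$. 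Combining this with the previous cases completes the induction, so that $A(\Gamma)\in\mathcal{A}(\mathcal{E})\subseteq\mathcal{C}$ and $A(\Gamma)$ has decidable rational subset membership problem.
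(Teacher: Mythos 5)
Your proposal is correct and follows essentially the same route as the paper: an induction on $|V\Gamma|$ that peels off free-product factors and central $\Z$ factors, reduces to rank-$\le 2$ (dihedral) base cases handled as finite extensions of elementary RAAGs, and concludes via the closure properties of the Lohrey--Steinberg class $\mathcal{C}$ of \autoref{thm: class of groups with decidable rational subset membership}. The only noteworthy difference is organizational: where the paper obtains its central vertex by citing the lemmas of Almeida--Lima and analyses the link of a labelled edge (Claims 1 and 2), you derive a universal vertex from the fact that the underlying graph is trivially perfect and then show directly that $\Gamma$ is complete with a single labelled edge when that vertex meets a label $>2$, which makes the graph-theoretic step self-contained but does not change the substance of the argument.
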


\begin{proof}
It is enough to show the result when~$\Gamma$ is connected, since decidability of rational subset membership problem is preserved under free products (see point 5. in \autoref{thm: class of groups with decidable rational subset membership}).

If~$\Gamma$ does not have any edge labeled by~$p > 2$, then~$\Gamma$ is a transitive forest (as it does not contain~$P_4$ and~$C_4$). In this case the result follows by \autoref{thm: main theorem for RAAGs}.

Now suppose that~$\Gamma$ contains an edge~$e = \{x, y\}$ with label~$p > 2$. If~$\Gamma$ does not contain other vertices then~$A(\Gamma) = D_p$, and~$D_p \in \mathcal{A}(Rat)$ by \autoref{cor: dihedral Artin groups are in A(Rat)}.

Now assume~$|V\Gamma| > 2$. As~$\Gamma$ is connected there is at least one vertex adjacent to~$x$ or~$y$; since~$\Gamma$ does not contain paths of the form~$P_3^{m,n}$ for~$m + n > 4$, any vertex of~$V\Gamma \setminus \{x, y\}$ which is adjacent to~$x$ or~$y$ should be adjacent to both; moreover both the edges~$\{a, x\}$ and~$\{a, y\}$ are unlabeled because the only triangles that~$\Gamma$ can contain have at least two unlabeled edges. Denote by~$L$ the vertices on the link of the edge~$e = \{x, y\}$, and let~$F = V\Gamma \setminus (L \cup \{x, y\})$ as in the figure below.
\begin{figure}[H]
\centering
\begin{tikzpicture}[>={Straight Barb[length=7pt,width=6pt]},thick]

\draw[thick] (0,0) -- node[below]{$p$} (4,0);

\draw[thick, gray] (1,2) --  (0.75, 3.5);
\draw[thick, gray] (3,2) --  (3.25, 3.55);

\draw[thick, gray] (0.75,3.5) -- node[above]{$q$} (2.25,3.75);

\draw[thick, gray] (1,2) -- (2.25,3.75);
\draw[thick, gray] (1,2) -- (3.25,3.5);

\draw[thick, brown] (1,2) --  (3, 2);

\draw[thick] (0,0) --  (1,2);
\draw[thick] (0,0) --  (3,2);

\draw[thick] (4,0) --  (1,2);
\draw[thick] (4,0) --  (3,2);

\draw[fill=black] (0,0) circle (1.5pt) node[below] {$x$};
\draw[fill=black] (4,0) circle (1.5pt) node[below] {$y$};

\draw[fill=orange] (1,2) circle (1.5pt) node[left] {$l_1$};
\draw[fill=orange] (3,2) circle (1.5pt) node[right] {$l_2$};


\draw[fill=gray] (0.75,3.5) circle (1.5pt) node[left] {$f_1$};
\draw[fill=gray] (2.25,3.75) circle (1.5pt) node[above] {$f_2$};
\draw[fill=gray] (3.25,3.5) circle (1.5pt) node[right] {$f_3$};
\end{tikzpicture}
\end{figure} 

{\bf Claim 1.} The subgraph induced by~$L$ is complete with unlabeled edges.

{\it Proof of Claim 1.} Consider these two cases:
\begin{itemize}
\item[(i)] If two vertices~$l_1, l_2 \in L$ are not connected by an edge, then the subgraph induced by the vertex set~$\{x, y, l_1, l_2\}$ would be isomorphic to the chordal Artin square~$S_p$, contradicting our assumption. 
\item[(ii)] On the other hand, if two vertices~$l_1, l_2 \in L$ are connected by an edge with label~$q > 2$, then the subgraph induced by the vertex set~$\{x, y, l_1, l_2\}$ would be isomorphic to the chordal Artin square~$S_{p,q}$, contradicting our assumption. 
\end{itemize}
This finishes proof of Claim 1. Note that if~$|F| = 0$, then~$A(\Gamma) = D_p \times \Z^{|L|}$ which would have decidable rational subset membership problem, because it is an iterated direct product of~$D_p$ by~$\Z$.
Note that if~$|F| = 0$, then every vertex~$l \in L$ is central -- connected to every vertex in~$V\Gamma \setminus \{l\}$ by unlabeled edges. Moreover, one has the following also when~$|F| > 0$.

{\bf Claim 2.} 
There is a vertex~$l \in L$ which is central in~$\Gamma$.

{\bf Observation.} Every edge with one endpoint in~$L$ and the other in~$F$ is unlabeled. Otherwise if~$\{l, f\} \in E\Gamma$ with~$l \in L$, and~$f \in F$ has a label~$k > 2$, then~$\Gamma$ would contain an induced subgraph (the one with vertex set~$\{x, l, f\}$) of the form~$P_3^{2, k}$ with~$k > 2$, which is not allowed. 

The proof of Claim 2 comes from the proof of the two results \cite[Lemma 4.5, Lemma 4.6]{almeida2021subgroup}. Indeed, their results imply that~$\Gamma$ has a central vertex, as~$|V\Gamma| \geq 3$. The fact that this central vertex belongs to~$L$ is obvious, because for any vertex~$u \in F \cup \{x, y\}$ there is a vertex~$u' \in V\Gamma$ such that~$v, v'$ are not adjacent. The other reason why we can use those results comes from~$\mathcal{A}(Rat) \subseteq \mathcal{A} \setminus \mathcal{A}(P_4 \lor C_4) = \mathcal{A}(SgSep)$.

In the example of the given graph, we note that~$l_1$ is central, hence we have 
\[
A(\Gamma) = \Z \times A(\Gamma \setminus \{l\}) = \Z \times \left( A(\Gamma_1) \ast A(\Gamma_2) \right),
\]
where~$\Gamma_1$ and~$\Gamma_2$ are the connected components of~$\Gamma \setminus \{l\}$ drawn below.
\begin{figure}[H]
\centering
\begin{tikzpicture}[>={Straight Barb[length=7pt,width=6pt]},thick]

\draw[thick] (0,0) -- node[below]{$p$} (4,0);

\draw[thick, gray] (3,2) --  (3.25, 3.55);

\draw[thick, gray] (0.75,3.5) -- node[above]{$q$} (2.25,3.75);



\draw[thick] (0,0) --  (3,2);

\draw[thick] (4,0) --  (3,2);

\draw[fill=black] (0,0) circle (1.5pt) node[below] {$x$};
\draw[fill=black] (4,0) circle (1.5pt) node[below] {$y$};

\draw[fill=orange] (3,2) circle (1.5pt) node[right] {$l_2$};


\draw[fill=gray] (0.75,3.5) circle (1.5pt) node[left] {$f_1$};
\draw[fill=gray] (2.25,3.75) circle (1.5pt) node[above] {$f_2$};
\draw[fill=gray] (3.25,3.5) circle (1.5pt) node[right] {$f_3$};
\end{tikzpicture}
\end{figure} 
Note that in general, by using Claim 2, one can express~$A(\Gamma)$ as a direct product of~$\Z$ and~$A(\Gamma \setminus \{l\})$, where the later is a free product of Artin groups based on graphs with less vertices than~$\Gamma$. Now the proof follows by induction on~$|V\Gamma|$.
\end{proof}

\bigskip
{\bf Final remark:} Note that Section 4 of \citep{gray2024membership} deals with the same problems as this article. Any overlapping of the presentation is not intentional.
\bigskip

\noindent{\textbf{{Acknowledgments}}} 
The author acknowledges support from the EPSRC Fellowship grant EP/V032003/1 ‘Algorithmic, topological and geometric aspects of infinite groups, monoids and inverse semigroups’.

\bibliography{main}

\begin{thebibliography}{19}
\providecommand{\natexlab}[1]{#1}
\providecommand{\url}[1]{\texttt{#1}}
\expandafter\ifx\csname urlstyle\endcsname\relax
  \providecommand{\doi}[1]{doi: #1}\else
  \providecommand{\doi}{doi: \begingroup \urlstyle{rm}\Url}\fi

\bibitem[Almeida and Lima(2021)]{almeida2021subgroup}
K.~Almeida and I.~Lima.
\newblock Subgroup separability of {Artin} groups.
\newblock \emph{Journal of Algebra}, 583:\penalty0 25--37, 2021.

\bibitem[Almeida and Lima(2024)]{almeida2024subgroupseparabilityartingroups}
K.~Almeida and I.~Lima.
\newblock Subgroup separability of {Artin} groups {II}, 2024.
\newblock URL \url{https://arxiv.org/abs/2403.05483}.

\bibitem[Antol{\'\i}n and Foniqi(2024)]{antolin2024subgroups}
Y.~Antol{\'\i}n and I.~Foniqi.
\newblock Subgroups of even {Artin} groups of {FC} type.
\newblock \emph{Journal of Group Theory}, 2024.
\newblock \doi{doi:10.1515/jgth-2023-0093}.
\newblock URL \url{https://doi.org/10.1515/jgth-2023-0093}.

\bibitem[Blasco-Garc{\'\i}a et~al.(2018)Blasco-Garc{\'\i}a,
  Mart{\'\i}nez-P{\'e}rez, and Paris]{blasco2018poly}
R.~Blasco-Garc{\'\i}a, C.~Mart{\'\i}nez-P{\'e}rez, and L.~Paris.
\newblock Poly-freeness of even {Artin} groups of {FC} type.
\newblock \emph{Groups, Geometry, and Dynamics}, 13\penalty0 (1):\penalty0
  309--325, 2018.

\bibitem[Bodart(2024)]{bodart2024membership}
C.~Bodart.
\newblock Membership problems in nilpotent groups.
\newblock \emph{arXiv preprint arXiv:2401.15504}, 2024.

\bibitem[Charney(2007)]{charney2007introduction}
R.~Charney.
\newblock An introduction to right-angled {Artin} groups.
\newblock \emph{Geometriae Dedicata}, 125\penalty0 (1):\penalty0 141–158,
  2007.

\bibitem[Crisp and Paris(2001)]{crisp2001solution}
J.~Crisp and L.~Paris.
\newblock The solution to a conjecture of {Tits} on the subgroup generated by
  the squares of the generators of an {Artin} group.
\newblock \emph{Inventiones mathematicae}, 145:\penalty0 19--36, 2001.

\bibitem[Dasbach et~al.(2011)Dasbach, Mangum, and
  Birman]{dasbach2001automorphism}
O.~T. Dasbach, B.~S. Mangum, and T.~J. Birman.
\newblock The automorphism group of a free group is not subgroup separable.
\newblock \emph{AMS IP studies in advanced mathematics}, 24:\penalty0 23--28,
  2011.

\bibitem[Droms(1987)]{droms1987subgroups}
C.~Droms.
\newblock Subgroups of graph groups.
\newblock \emph{J. ALGEBRA.}, 110\penalty0 (2):\penalty0 519--522, 1987.

\bibitem[Gray and Nyberg-Brodda(2024)]{gray2024membership}
R.~D. Gray and C.-F. Nyberg-Brodda.
\newblock Membership problems in braid groups and {Artin} groups.
\newblock \emph{arXiv preprint arXiv:2409.11335}, 2024.

\bibitem[Jankiewicz and Schreve(2022)]{jankiewicz2022right}
K.~Jankiewicz and K.~Schreve.
\newblock Right-angled {Artin} subgroups of {Artin} groups.
\newblock \emph{Journal of the London Mathematical Society}, 106\penalty0
  (2):\penalty0 818--854, 2022.

\bibitem[Kambites(2009)]{kambites2009commuting}
M.~Kambites.
\newblock On commuting elements and embeddings of graph groups and monoids.
\newblock \emph{Proceedings of the Edinburgh Mathematical Society}, 52\penalty0
  (1):\penalty0 155--170, 2009.

\bibitem[Kapovich et~al.(2005)Kapovich, Weidmann, and
  Myasnikov]{kapovich2005foldings}
I.~Kapovich, R.~Weidmann, and A.~Myasnikov.
\newblock Foldings, graphs of groups and the membership problem.
\newblock \emph{International Journal of Algebra and Computation}, 15\penalty0
  (01):\penalty0 95--128, 2005.

\bibitem[Katayama(2017)]{katayama2017raags}
T.~Katayama.
\newblock {RAAG}s in knot groups.
\newblock \emph{Geometry and Analysis of Discrete Groups and Hyperbolic
  Spaces}, 66:\penalty0 37--56, 2017.

\bibitem[Kim and Koberda(2013)]{kim2013embedability}
S.-h. Kim and T.~Koberda.
\newblock Embedability between right-angled {Artin} groups.
\newblock \emph{Geometry \& Topology}, 17\penalty0 (1):\penalty0 493--530,
  2013.

\bibitem[Lohrey and Steinberg(2008)]{lohrey2008submonoid}
M.~Lohrey and B.~Steinberg.
\newblock The submonoid and rational subset membership problems for graph
  groups.
\newblock \emph{Journal of Algebra}, 320\penalty0 (2):\penalty0 728--755, 2008.

\bibitem[Metaftsis and Raptis(2008)]{metaftsis2008profinite}
V.~Metaftsis and E.~Raptis.
\newblock On the profinite topology of right-angled {Artin} groups.
\newblock \emph{Journal of Algebra}, 320\penalty0 (3):\penalty0 1174--1181,
  2008.

\bibitem[Mikhailova(1966)]{mikhailova1966occurrence}
K.~Mikhailova.
\newblock The occurrence problem for direct products of groups.
\newblock \emph{Matematicheskii Sbornik}, 112\penalty0 (2):\penalty0 241--251,
  1966.

\bibitem[{Van der Lek}(1983)]{van1983homotopy}
H.~{Van der Lek}.
\newblock \emph{The homotopy type of complex hyperplane complements}.
\newblock PhD thesis, Katholieke Universiteit te Nijmegen, 1983.

\end{thebibliography}

\noindent\textit{\\ Islam Foniqi,\\
The University of East Anglia\\ 
Norwich (United Kingdom)\\}
{email: i.foniqi@uea.ac.uk}

\end{document}